\renewcommand\labelenumi{(\roman{enumi})}
\renewcommand\theenumi\labelenumi
\newcommand{\eps}{\ensuremath{\varepsilon}}
\newcommand{\ind}{\mathbbm{1}}
\newcommand{\remove}[1]{}
\newcommand{\E}{\mathbbm{E}}
\newcommand{\F}{\mathbbm{F}}
\newcommand{\C}{\mathbbm{C}}
\newcommand{\N}{\mathbbm{N}}
\newcommand{\cQ}{\mathcal{Q}}
\newcommand{\R}{\mathbbm{R}}
\newcommand{\Cc}{\mathcal{C}}
\newcommand{\fdag}{\ensuremath{f^{\dagger}}}
\newcommand{\gownn}[1]{G_n^{\alpha}[#1]}
\newcommand{\Z}{\mathbbm{Z}}
\newcommand{\fint}{\left[ -\frac 12, \frac 12 \right]}
\newcommand{\com}{\Delta}
\newcommand{\Fqn}{\F_q^n}
\newcommand{\bx}{\mathbf{x} }
\newcommand{\sm}{\setminus}
\newcommand{\se}{\subseteq}
\newcommand{\xv}{\mathbf{x} }
\newcommand{\compl}[1]{{#1}^{\mathsf{c}}} 
\newcommand{\Tr}{\mathrm{Tr}}
\newcommand{\solA}[2]{\sol(#1;#2)} 
\title{\vspace{-0.8cm}On uncommon systems of equations}
\author{Nina Kam\v{c}ev}
\address{Department of Mathematics, Faculty of Science, University of Zagreb, Croatia}
\email{nina.kamcev@math.hr}
\author{Anita Liebenau}
\address{School of Mathematics and Statistics, UNSW Sydney, NSW 2052, Australia.}
\email{a.liebenau@unsw.edu.au}
\author{Natasha Morrison}
\address{Mathematics and Statistics, University of Victoria, Victoria, B.C. Canada V8P 5C2}
\email{nmorrison@uvic.ca}
\newtheoremstyle{case}{}{}{\normalfont}{}{\itshape}{:}{ }{}
\newtheorem{thm}{Theorem}
\newtheorem{lem}[thm]{Lemma}
\newtheorem{prop}[thm]{Proposition}
\newtheorem{conj}[thm]{Conjecture}
\newtheorem{cor}[thm]{Corollary}
\newtheorem{claim}[thm]{Claim}
\theoremstyle{definition}
\newtheorem{defn}[thm]{Definition}
\newtheorem{obs}[thm]{Observation}
\newtheorem{rem}[thm]{Remark}
\newtheorem{question}[thm]{Question}
\newtheoremstyle{case}{}{}{\normalfont}{}{\itshape}{\normalfont:}{ }{}
\theoremstyle{case}
\numberwithin{equation}{section}
\numberwithin{thm}{section}
\newcommand{\er}[1]{\E \left[ #1 \right]}
\newcommand{\erm}[2]{\E_{#1} \left[ #2 \right]}
 \newcommand{\errm}[2]{\E_{#1} \left[ #2 \right]}
\newcommand{\sol}{\mathrm{sol}}
\begin{document}

\maketitle

\begin{abstract}
A linear system $L$ over $\F_q$ is \emph{common} if the number of monochromatic solutions to $L=0$ in any two-colouring of $\F_q^n$ is asymptotically at least the expected number of monochromatic solutions in a random two-colouring of $\F_q^n$. Motivated by existing results for specific systems (such as Schur triples and arithmetic progressions), as well as extensive research on common and Sidorenko graphs, Saad and Wolf recently initiated the systematic study of common systems of linear equations. 

Building upon earlier work of Cameron, Cilleruelo and Serra, as well as Saad and Wolf, common linear equations have recently been fully characterised by Fox, Pham and Zhao, who asked about common \emph{systems} of equations. In this paper we move towards a classification of common systems of two or more linear equations. In particular we prove that any system containing an arithmetic progression of length four is uncommon, resolving a question of Saad and Wolf. This follows from a more general result which allows us to deduce the uncommonness of a general system from certain properties of one- or two-equation subsystems.
\end{abstract}

\maketitle

\section{Introduction}
Finding arithmetic structure in subsets of the integers is a fundamental theme throughout additive combinatorics. Results such as Szemer\'edi’s Theorem on arithmetic progressions in dense sets and the Green-Tao theorem on arithmetic progressions in the primes are cornerstone results of modern mathematics that have inspired and united mathematicians specialising in areas such as ergodic theory, number theory, Fourier analysis and combinatorics. Naturally, these density theorems go hand in hand with analogous results that guarantee monochromatic substructures in any colouring of a certain object. Classical examples are Van der Waerden's theorem and Schur's theorem that assert that any colouring of $[n]$ (for large $n$) contains a monochromatic arithmetic progression, or a monochromatic solution to $x+y = z$, called a \emph{Schur triple}, respectively.

Given these results, it is natural to wonder how many such monochromatic substructures are present. In fact, the proofs of Schur's theorem and Van der Waerden's theorem imply that any colouring of $[n]$ contains at least a positive proportion of Schur triples and arithmetic progressions, respectively. In 1996, Graham, R\"odl and Ruci\'nski~\cite{grr96} asked about the minimal proportion of monochromatic Schur triples in two-colourings of $[n]$, which was independently resolved in~\cite{datskovsky03,rz98,schoen99} and shown to be far below the number expected in a random colouring. In contrast to this, in the finite Abelian group setting, Cameron, Cilleruelo and Serra~\cite{ccs07} showed that the random colouring minimises the number of monochromatic solutions to any equation in an odd number of variables, which includes Schur triples. 

The phenomenon that a random colouring minimises the number of monochromatic copies of a particular substructure is present throughout combinatorics. In particular, a graph $H$ is called common if, for $n$ large, it has the property that number of monochromatic copies in any colouring of the edges of the $n$-vertex clique, denoted by $K_n$, is minimised by a random 2-colouring. The study of such graphs goes back to Erd\H{o}s~\cite{erdos62}, who conjectured in 1962 that $K_k$ is common for $k \ge 4$. An earlier result of Goodman~\cite{goodman59} states that $K_3$ is common, and in 1980, Burr and Rosta~\cite{br80} extended Erd\H{o}s' conjecture to arbitrary $H.$ In 1989, the Burr-Rosta conjecture was independently disproved by Thomason~\cite{thomason89}, who also disproved Erd\H{o}s' conjecture by showing that $K_4$ is uncommon, and by Sidorenko~\cite{sid89}. Although these conjectures are false, the desire to characterise common and uncommon graphs continues to this day, for example see~\cite{conlon12,com3,com1,com2,jst96,sid89,sid93,thomason97}. A related conjecture is the well known Sidorenko's conjecture on subgraph densities of bipartite graphs which has received considerable attention~\cite{sid1,sid2,sid3,sid4,sid5,sid6,sid7}. 

Inspired by the above-mentioned results for graphs and arithmetic structures, Saad and Wolf~\cite{sw17} initiated a systematic study of the number of monochromatic solutions to linear patterns in more generality. From now on, we work over the finite field $\F_q,$ where $q$ is a prime power, following e.g.~\cite{fpz19,sw17}.  Consider a linear map $L : (\Fqn)^k \to (\Fqn)^m$ with coefficients in~$\F_q$. Say that $L$ is \emph{common} if the density of monochromatic solutions to the system of linear equations $L(\xv)=0$ in any two-colouring of $\Fqn$ is asymptotically minimised by the expected density of solutions in a random two-colouring. 
We provide a more practical definition in Section~\ref{sec:prelims}, \thref{def:fctdef}.

Earlier work focused on systems consisting of a single equation $a_1x_1 + \dots + a_kx_k = 0$ with coefficients $a_i \in \F_q^\times = \F_q \setminus \{ 0\}$. 
As mentioned above, whenever $k$ is odd, such an equation is common~\cite{ccs07}. For even $k$, Saad and Wolf~\cite{sw17} proved that the equation is common whenever the coefficients can be partitioned into pairs, each pair summing to zero. They conjectured that this sufficient condition is also necessary, which was confirmed by Fox, Pham and Zhao~\cite{fpz19}. Hence homogeneous equations of this form are completely characterised. 

Fox, Pham and Zhao~\cite{fpz19} asked about a similar characterisation for common \emph{systems} of equations, hoping that it might lead to a better understanding of the analogous properties for graphs and hypergraphs, but they note that they do not have a guess for such a characterisation. A number of specific systems had been considered by Saad and Wolf, but no results covering a general class of systems are known. It would be desirable to have such a classification for common systems of two or more linear equations and we make significant progress towards this goal. As we will see, characterising the properties of systems of equations is much more complex than the single equation case. Our results focus on showing that, in many cases, the presence of a particular subsystem is sufficient for a system to be uncommon. 

In order to state our main results, we now introduce some definitions. Let $L$ be a collection of $m$ linear forms $L_1,\ldots,L_m$ in $k$ variables with coefficients in $\F_q.$ 
For an $\ell$-variable system $L',$ we say that {\em $L$ induces $L'$ as a subsystem} if there exists a subset $\{i_1,\ldots,i_{\ell}\} \se[k]$ such that for ${\bf x} \in \F_q^k$, $L(x_1, \dots, x_k)=0$ implies that $L'(x_{i_1},\ldots, x_{i_{\ell}})=0.$ 
We expand upon the notion of induced subsystems and its connection to  submatrices in Section~\ref{sec:prelims}, where we present a more algorithmic way to view this concept.
A system $L$ is called an {\em $(m\times k)$-system} if the rows of the coefficient matrix of $L$ are linearly independent over $\F_q$. 
Finally, following the terminology from~\cite{rr97}, $L$ is \emph{redundant} if it induces the equation $x_i - x_j = 0$, for some $i \not= j$, and \emph{irredundant} otherwise. 
We can now state our main result.

\begin{thm} \thlabel{t:4APs}
	Let $q$ be an odd prime power, let $2 \le m < k$ be integers and let $L$ be an irredundant  $(m \times k)$-system over $\mathbb{F}_q$. If $L$ induces a $(2 \times 4)$-system, then $L$ is uncommon. 
\end{thm}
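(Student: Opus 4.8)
The plan is to pass to the Fourier-analytic reformulation of commonness from \thref{def:fctdef} and then perturb the constant colouring. Writing a two-colouring as $f=\tfrac12(1+g)$ with $g\colon\Fqn\to[-1,1]$, the density of monochromatic solutions to $L\bx=0$ equals
\[
2^{1-k}+2\sum_{\substack{\emptyset\neq S\se[k]\\ |S|\ \mathrm{even}}}2^{|S|-k}\,\E_{L\bx=0}\Big[\prod_{i\in S}g(x_i)\Big],
\]
because the odd-size contributions of the two colour classes cancel. Thus $L$ is uncommon as soon as some bounded $g$ makes the displayed sum negative. For an irredundant $L$, irredundancy forbids $x_i\equiv x_j$ on $\ker L$, so (choosing $g$ below to be built from high-rank characters, for which $\E_{L\bx=0}[\chi(x_i)\overline{\chi(x_j)}]=0$ once $x_i-x_j\not\equiv0$) the only surviving quadratic term is $(\E g)^2$, and the interesting behaviour sits in the terms indexed by triples and quadruples $S$, which are governed by the induced subsystems on those coordinates. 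This is the mechanism behind the promised ``more general result'': uncommonness of $L$ should follow from the existence of a single bounded perturbation that is already bad for a small induced subsystem and survives being embedded into $L$.

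The heart of the argument is therefore a \textbf{reduction lemma}. Suppose $L$ induces, on a $4$-element coordinate set $T$, the full subsystem $L'$, which is a $(2\times4)$-system (and irredundant, since $L$ is). I would take a perturbation $g=\eps\cdot h$, where $h$ is a linear combination of a bounded number of generic additive characters of $\Fqn$ adapted to $L'$, with $\eps$ and $\E g$ small parameters to be tuned. Since $L'$ is the \emph{full} induced system on $T$, the projection of $\ker L$ to the $T$-coordinates is exactly $\ker L'$, so $\E_{L\bx=0}[\prod_{i\in S}g(x_i)]=\E_{L'\bx=0}[\prod_{i\in S}g(x_i)]$ for every $S\se T$; and for $S\not\se T$ the extra terms either vanish (genericity of $\chi$ together with irredundancy) or are of strictly higher order in $\eps$ and $\E g$. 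Carrying out this book-keeping — identifying exactly which low-order terms survive, matching them with those of $L'$, and checking that the induced subsystems of $L$ on the remaining coordinates cannot help the colouring — reduces the uncommonness of $L$ to that of $L'$. I expect this reduction, and pinning down precisely the right ``property of one- or two-equation subsystems'', to be the main technical obstacle.

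It then remains to prove the base case: \textbf{every irredundant $(2\times4)$-system over $\F_q$, $q$ odd, is uncommon.} For the non-degenerate representative the count simplifies to $3(\E g)^2+2\,\E_{L'\bx=0}\big[\prod_{i=1}^4 g(x_i)\big]$, and writing $g=\mu+g_0$ with $\E g_0=0$ this becomes $3\mu^2+2\mu^4+2\mu\,\kappa(g_0)+2\Lambda(g_0)$, where $\kappa(g_0)$ is the sum of the signed counts of the four induced three-variable equations of $L'$ evaluated at $g_0$, and $\Lambda(g_0)$ is the quartic ``four-variable pattern'' count. First I would classify irredundant $(2\times4)$-systems up to the natural equivalence (row operations, permuting and scaling variables): the four columns are four distinct points of $\mathbb{P}^1(\F_q)$, with the four-term arithmetic progression as the extremal representative, plus a few degenerate families (a zero column, or two proportional columns), in which case $L'$ already induces an uncommon one- or two-variable equation and is disposed of directly. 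In the main case I would exhibit a bounded $g_0$ with $\kappa(g_0)\neq0$ — exploiting that a three-variable equation with non-proportional coefficients has a non-trivial signed count — and tune $\mu$ against it; alternatively, one produces a bounded mean-zero $g_0$ with $\Lambda(g_0)<0$ directly by means of an algebraic (quadratic-phase or Legendre-symbol type) test function. Either route makes the expression negative. Finally, the case of small characteristic, in particular $q$ a power of $3$ (where the four-term progression is itself redundant and must be replaced by another irredundant $(2\times4)$-system), requires separate but routine treatment.
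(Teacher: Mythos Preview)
Your framework is right: the expansion is the paper's~\eqref{eq:thomasonst}, and reducing to the $4$-variable critical subsystems is essentially \thref{t:main-uncommon}. The genuine gap is in the reduction step. You propose a test function $g=\eps h$ built from \emph{linear} characters ``adapted to $L'$'', and claim that for $4$-element $S\neq T$ the term $\Phi_L(S,g)$ vanishes by genericity. But when $s(L)=3$ there may be many other critical $4$-sets $B$, on which $L$ induces either a single length-$4$ equation (rank $1$) or another $(2\times4)$-system (rank $2$), and each contributes $\Lambda_{L_B}(h)$ at the \emph{same} order $\eps^4$ as your target term. To make $\Lambda_{L'}(h)\neq0$ at all, the frequencies must satisfy the two linear relations dictated by $L'$; once they do, genericity is lost and you have no mechanism for controlling the remaining $\Lambda_{L_B}(h)$, which may well be positive and swamp the term you engineered.

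The paper resolves this with two ideas you are missing. First (\thref{l:psi-neg}), it constructs a single balanced $\psi$ with $\Lambda_{M}(\psi)<-\nu$ for \emph{every} $(2\times4)$-system $M$ with $s(M)=3$ simultaneously; this forces all rank-$2$ critical contributions to have the same (negative) sign. Second, it multiplies $\psi^\dagger$ by a \emph{quadratic} phase (the operator $G_n^\alpha$ of~\eqref{def:gowers}), driving all Fourier coefficients to $O(q^{-n/2})$ (\thref{l:fhat-zero}); by~\eqref{eq:single-eq} this makes every rank-$1$ critical contribution negligible. Crucially, the rank-$2$ contributions survive this operation (\thref{l:gowers}) precisely because a $(2\times4)$-system is not controlled by linear Fourier uniformity --- this is the Gowers phenomenon, and it is exactly what a linear-character construction cannot exploit, since linear characters cannot separate rank-$1$ from rank-$2$ critical subsystems. (A smaller point: your base-case route of tuning $\mu$ against $\kappa(g_0)$ does not close either, since the positive $3\mu^2$ from the six $2$-subsets competes with $2\mu\kappa$; optimising over $\mu$ would require $\kappa(g_0)^2>6\Lambda(g_0)$, a further non-trivial inequality you do not address.)
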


We remark that an $(m \times k)$-system always satisfies $m \le k$ and that $m=k$ implies that $L$ is trivially common (see \thref{rem:mk}). Furthermore, the irredundancy condition on the subsystem (implied by the irredundancy of $L$) is required, as, for example, the system $x_1=x_2=\ldots = x_k$ is common. We restrict our attention to irredundant systems as, in Section~\ref{sec:prelims}, we see that the commonness of a redundant system is determined by the commonness of an irredundant subsystem. 

For any prime $p>3$ and any power $q$ of $p$, an arithmetic progression of length four (4-AP) is an irredundant $(2 \times 4)$-system over $\F_q$, and so in this case \thref{t:4APs} not only tells us that a 4-AP itself is uncommon, but it resolves a question of Saad and Wolf~\cite{sw17} that any (irredundant) system inducing a 4-AP is uncommon.  

\begin{cor}\thlabel{c:main}
	Let $q$ be an odd prime power, let $2 \le m < k$ be integers and let $L$ be an irredundant $(m \times k)$-system over $\F_q$. If $L$ induces an arithmetic progression of length four, then $L$ is uncommon. 
\end{cor}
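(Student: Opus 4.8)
The plan is to derive \thref{c:main} directly from \thref{t:4APs}; the only work is to check that an arithmetic progression of length four, presented as a system of linear equations, is an irredundant $(2\times 4)$-system over $\F_q$ whenever $\mathrm{char}(\F_q)>3$. Concretely, I would realise the $4$-AP on variables $x_1,x_2,x_3,x_4$ as the ``vanishing second differences'' system
\begin{equation*}
x_1 - 2x_2 + x_3 = 0, \qquad x_2 - 2x_3 + x_4 = 0,
\end{equation*}
whose solution set is exactly $\{(a,a+d,a+2d,a+3d) : a,d\in\F_q\}$. The coefficient matrix $\left(\begin{smallmatrix} 1 & -2 & 1 & 0 \\ 0 & 1 & -2 & 1\end{smallmatrix}\right)$ has two linearly independent rows, so this is indeed a $(2\times 4)$-system.

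Next I would verify irredundancy. The $4$-AP induces $x_i - x_j = 0$ if and only if $(i-1)d = (j-1)d$ for all $d\in\F_q$, i.e.\ if and only if $p \mid (i-j)$ where $p=\mathrm{char}(\F_q)$; since $1\le i,j\le 4$ and $p>3$, this forces $i=j$, so the $4$-AP system is irredundant. Hence, for any power $q$ of a prime $p>3$, a $4$-AP is an irredundant $(2\times 4)$-system, and \thref{t:4APs} applied with this subsystem shows immediately that any irredundant $(m\times k)$-system with $2\le m<k$ that induces it is uncommon.

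It remains to address $q$ a power of $3$, which is included in the statement but is degenerate. Here $3d=0$ for every $d\in\F_q$, so every $4$-AP satisfies $x_1=x_4$; that is, the $4$-AP system induces $x_1-x_4=0$ (indeed, in characteristic $3$ it reads $x_1+x_2+x_3=0$, $x_2+x_3+x_4=0$, and subtracting gives $x_1-x_4=0$). Since the ``induces'' relation is transitive, any system $L$ that induces a $4$-AP also induces $x_1-x_4=0$ and is therefore redundant; thus no irredundant $(m\times k)$-system in characteristic $3$ induces a $4$-AP, and the corollary holds vacuously in that case.

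I do not expect a genuine obstacle here beyond \thref{t:4APs} itself: the reduction is a short structural check. The one point that requires care — and the reason the interesting content lives in characteristic greater than $3$, and why the irredundancy hypothesis cannot be dropped — is precisely the characteristic-$3$ degeneracy of $4$-APs, which must be handled (or excluded) for the statement to be meaningful.
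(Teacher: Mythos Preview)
Your proposal is correct and matches the paper's own derivation: the paper simply remarks that for any prime $p>3$ and power $q$ of $p$, a $4$-AP is an irredundant $(2\times 4)$-system, whence \thref{t:4APs} applies directly. Your additional explicit treatment of characteristic~$3$ (where any system inducing a $4$-AP is redundant, so the hypothesis is vacuous) is a welcome clarification that the paper leaves implicit.
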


Both \thref{t:4APs} and \thref{c:main} can be seen as analogues of the famous result of Jagger, \v{S}\v{t}ov\'{\i}\v{c}ek and Thomason~\cite{jst96}, showing that any graph containing a particular small subgraph, namely $K_4$, is uncommon. In our case we see that any system containing a small subsystem with particular structure is uncommon.

Since any arithmetic progression of length $k \ge 4$ induces a $4$-AP, our result implies that any $k$-AP, and any system inducing a $k$-AP is uncommon. This concludes earlier investigations into the properties of $k$-APs. Notably, a colouring by Wolf~\cite{w10}, based on a construction of Gowers~\cite{gowers20}, showed that 4-APs are uncommon over $\mathbb{Z}_N,$ for large $N$. 
\remove{This was based on a construction of  Gowers~\cite{gowers20}, who used techniques from quadratic Fourier analysis to find a `uniform' set of density $1/2$ that certified uncommonness in $\mathbb{Z}_N$. }
See also~\cite{sw17} for a proof of uncommoness of 4-APs over $\F_5.$ We remark that \thref{c:main} has been independently\footnote{We learnt of \cite{v21} in the final stages of preparing our original preprint.} proved by Versteegen~\cite{v21}, whose work also covers the more general case of finite abelian groups. 

These earlier results in~\cite{gowers20,sw17,w10} used geometric intuition relying on strong structural properties of arithmetic progressions, but perhaps surprisingly we do not utilise these properties here. Excitingly, \thref{t:4APs} applies to {\em any} irredundant $(2 \times 4)$-system (as we see below, irredundancy is not a strong condition). Not only does this determine the uncommonness of a large and general family of systems, but as we only use weak conditions on the structure of the solution space, there is reason to believe that our techniques could be used to characterise other general families of systems. 

\thref{t:4APs} is obtained as a consequence of a much more general (and more technical) result (Theorem~\ref{t:main-uncommon}) that can be applied to find a large class of uncommon systems. We postpone the statement of Theorem~\ref{t:main-uncommon} to Section~\ref{sec:main-thm}, by which time we will have introduced the required concepts. Roughly speaking, Theorem~\ref{t:main-uncommon} provides a sufficient condition for a system to be uncommon based solely on particular `critical' subsystems (which turn out to have rank at most two). So it provides a means of understanding a potentially complex high-rank system, by understanding certain low-rank subsystems. One exciting consequence of this is that it opens up avenues for using discrete Fourier analysis in studying systems with two or more equations. 

We now state a fairly straightforward application of Theorem~\ref{t:main-uncommon}. 
The \emph{length} of an equation $E$ is the number of variables in $E$ with \emph{non-zero} coefficients. Given a system $L$, let $s(L)$ denote the minimal length of an equation induced by $L$. 
 
\begin{thm} \thlabel{t:main-evens}
	Let $q$ be a prime power, let $2 \le m < k$ be integers and let $L$ be an $(m \times k)$-system over $\mathbb{F}_q$ such that $s(L)$ is even. If every equation of length $s(L)$ induced by $L$ is uncommon, then $L$ is uncommon. 
\end{thm}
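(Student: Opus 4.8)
The plan is to derive \thref{t:main-evens} from the general criterion \thref{t:main-uncommon}. First I would dispose of redundant $L$: if $L$ is redundant then it induces an equation $x_i-x_j=0$, which has length $2$ and is common (in every colouring of $\F_q^n$ each of its solutions, those with $x_i=x_j$, is monochromatic), so $s(L)=2$ and the hypothesis that every induced equation of length $s(L)$ be uncommon fails; hence the implication holds vacuously. From now on $L$ is irredundant and $s:=s(L)$ is a positive even integer, so $s\ge 2$; moreover $s<k$, since the $m$-dimensional row space of the coefficient matrix of $L$ meets the intersection of any $m-1$ coordinate hyperplanes in a nonzero subspace, so $L$ induces an equation of length at most $k-m+1<k$.

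I would then run the standard perturbative expansion underlying \thref{def:fctdef}. Up to a fixed positive factor, the number of monochromatic solutions of $L(\mathbf{x})=0$ in a colouring minus its expectation in a random colouring equals $\sum_{S}\sum_{L(\mathbf{x})=0}\prod_{i\in S}f(x_i)$, where $S$ ranges over the nonempty even subsets of $[k]$ and $f$ is the associated $\{-1,1\}$-valued function; to prove $L$ uncommon it suffices to make this negative using $f=\varepsilon u$ for a bounded mean-zero $u$ and small $\varepsilon>0$. For mean-zero $u$ the $S$-summand depends only on the induced subsystem $L|_S$ and vanishes whenever $L|_S$ leaves some variable of $S$ free or has rank zero; in particular it vanishes for every $S$ with $|S|<s$, since otherwise $L|_S$ would contain a nonzero induced equation of length at most $|S|<s$, contradicting the minimality of $s$. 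Thus the lowest-order contribution, of order $\varepsilon^{s}$, comes exactly from the sets $S$ with $|S|=s$ for which $L|_S$ constrains every variable of $S$, and for each such $S$ the rank of $L|_S$ is forced to be exactly one: rank at least two would, upon intersecting the row space of $L|_S$ with the kernel of one coordinate functional, again produce an induced equation of length strictly below $s$. So these $S$ are precisely the variable sets of the induced equations of length $s$, the scale-$s$ ``critical subsystems'' of $L$ are exactly those equations (at least one exists, namely a minimal-length one), and all of them are uncommon by hypothesis. This is also the precise point where evenness of $s$ matters: when $s$ is odd the minimal-length equations sit on an odd number of variables, never appear in the expansion, and one is driven to rank-two critical subsystems on $s+1$ variables --- the regime of \thref{t:4APs} --- whereas when $s$ is even one stays at rank one, and, $s$ being even, the Cameron, Cilleruelo and Serra theorem does not force these equations to be common, so the hypothesis is genuinely usable. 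With the scale-$s$ critical subsystems identified and all uncommon, \thref{t:main-uncommon} applies and yields that $L$ is uncommon.

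The substantive work --- and the step I expect to be the main obstacle --- sits entirely inside \thref{t:main-uncommon}: one must exhibit a \emph{single} bounded mean-zero $u$ (in the relevant $n\to\infty$ limit) for which $\sum_{E}c_E\sum_{E(\mathbf{y})=0}\prod_i u(y_i)<0$, the sum running over the length-$s$ induced equations $E$ with $c_E>0$, so that the individually negative-capable spectral contributions of the various $E$ (each available because $E$ is uncommon) do not cancel one another; and this must be arranged with enough quantitative uniformity that the $\varepsilon^{s}$ term keeps dominating the higher-order terms of the expansion as $n\to\infty$. By comparison, the steps above --- the reduction to irredundant $L$, the vanishing of the sub-$s$ terms, and the identification of the scale-$s$ critical subsystems with the induced equations of length $s$ --- are routine.
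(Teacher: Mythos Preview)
Your structural analysis is essentially the paper's: reduce to \thref{t:main-uncommon}, observe that when $s(L)$ is even the critical sets carry rank-one subsystems (this is \thref{lem:useful}(ii)), and these are exactly the induced equations of length $s(L)$, all uncommon by hypothesis. The redundant case, the bound $s<k$, and the parity discussion are correct but largely superfluous.

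The genuine gap is in your final paragraph. You write that ``\thref{t:main-uncommon} applies and yields that $L$ is uncommon'', and then say the substantive work ``sits entirely inside \thref{t:main-uncommon}''. This misreads the theorem: \thref{t:main-uncommon} is the conditional statement that \emph{if} a single balanced $f$ with $\sum_{B\in\Cc(L)}\Lambda_{L_B}(f)<0$ exists, \emph{then} $L$ is uncommon. Producing that $f$ is not inside \thref{t:main-uncommon} at all --- it is exactly the content of the proof of \thref{t:main-evens}, and you have not done it. Knowing that each $L_B$ individually admits some $f_B$ with $\Lambda_{L_B}(f_B)<0$ (which is what ``uncommon'' gives you, via Corollary~\ref{cor:simple}) does not by itself yield a common $f$; the $f_B$ could pull in opposite directions.

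The paper closes this gap with a probabilistic argument, \thref{l:random-fcs}, extracted from Fox--Pham--Zhao: there is a probability distribution on balanced functions $f:\F_q\to[-\tfrac12,\tfrac12]$ such that $\E[\Lambda_E(f)]=0$ for every uncommon even-length equation $E$, and such that with positive probability $\Lambda_E(f)>\eps$ simultaneously for all such $E$. Linearity of expectation gives $\E\big[\sum_{B\in\Cc(L)}\Lambda_{L_B}(f)\big]=0$, while the positive-probability event forces the sum to be strictly positive with positive probability; hence it is strictly negative with positive probability, and any such realisation $f^*$ feeds into \thref{t:main-uncommon}. This simultaneous-control step is the entire substance of the proof, and your proposal neither supplies it nor suggests a mechanism for it.
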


As a single equation $E$ with even support is only common if its coefficients can be partitioned into pairs, each summing to zero, in some sense, a `{typical}' equation of even length is uncommon. Similarly, Theorem~\ref{t:main-evens} says that a {`typical'} system with $s(L)$ even is uncommon. The hypothesis that $s(L)$ is even is more than an artefact of our proofs, which will be evident from the arguments, and the hypothesis is implicitly present in the results of~\cite{ccs07,fpz19,gowers20}.

In Section~\ref{sec:prelims} we introduce the `functional' notion of commonness that will be used throughout the paper. We also briefly recall the definitions from discrete Fourier analysis that will be used and introduce the notion of critical sets and subsystems, which is vital for the formulation of Theorem~\ref{t:main-uncommon} (our main technical theorem). In Section~\ref{sec:main-thm} we prove Theorem~\ref{t:main-uncommon} and deduce Theorem~\ref{t:main-evens}. Then, in Section~\ref{sec:unc-4-var} we prove that any irredundant $(2 \times 4)$-system is uncommon. In fact, our main result from this section (\thref{l:psi-neg}) plays a fundamental part in the proof of \thref{t:4APs}, which is given in Section~\ref{sec:ProofOfMain1}. We conclude with some interesting open questions and directions for future research in Section~\ref{sec:rems}.

\section{Preliminaries}\label{sec:prelims}

Our first goal is to formulate a `functional' notion of commonness, which will turn out to be more convenient for our purposes. 

Throughout the section, let $q$ be a prime power and let $n \ge 1$. 
Let $L$ be an $(m \times k)$-system, consisting of the linear forms $L_1,\ldots,L_m$ with coefficients in $\F_q.$ We identify $L$ with the $(m\times k)$-matrix whose rows consist of the coefficients of $L_1,\ldots, L_m.$ Then the solution set of $L$ in $A \subseteq \Fqn$ is
$$\solA{L}{A} := \{\bx = (x_1, \dots, x_k) \in A^{k}: L \bx^T = 0 \}.$$ 
Clearly, the set $\solA{L}{\Fqn}$ is invariant under row operations of the matrix $L.$ Thus, commonness does not depend on the choice of the representative matrix $L$. We will work interchangeably with systems of linear forms and their corresponding matrices throughout the paper. We  also write $L(\bx)$ or $L(x_1,\ldots,x_k)$ when we want to specify the variables.

Note that every solution $\xv\in\solA{L}{A}$ corresponds naturally to an $(n\times k)$-array in which the columns are elements in $A$ and every row is a solution to $L=0$ over $\F_q.$ The following is then immediate by recalling that the row vectors of an $(m \times k)$-system $L$ are linearly independent. 
\begin{obs}\thlabel{ob:n-vec}
	Let $L$ be an $(m\times k)$-system. 
	Then $|\solA{L}{\F_q^n}| = |\solA{L}{\F_q}|^n = q^{n(k-m)}$. 
\end{obs}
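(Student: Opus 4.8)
The plan is to prove both equalities separately, starting from the product structure of the solution set. First I would establish that $\solA{L}{\F_q^n} = \solA{L}{\F_q}^n$ in the natural sense: a tuple $\bx = (x_1, \dots, x_k) \in (\F_q^n)^k$ satisfies $L\bx^T = 0$ if and only if, writing each $x_j \in \F_q^n$ coordinate-wise as $x_j = (x_j^{(1)}, \dots, x_j^{(n)})$, every ``slice'' $(x_1^{(r)}, \dots, x_k^{(r)}) \in \F_q^k$ satisfies $L$ over $\F_q$, for each $r \in [n]$. This is immediate because $L$ has coefficients in $\F_q \subseteq \F_q^n$ acting coordinate-wise (equivalently, $\F_q^n$ is a free $\F_q$-module and $L$ acts on each coordinate independently), so the single vector equation $L\bx^T = 0$ over $\F_q^n$ decouples into $n$ independent copies of the equation over $\F_q$. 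This is precisely the ``$(n \times k)$-array'' reformulation mentioned just before the statement: the columns are the $x_j$, and each row is a slice. Hence the map $\bx \mapsto (\text{row}_1, \dots, \text{row}_n)$ is a bijection between $\solA{L}{\F_q^n}$ and $\left(\solA{L}{\F_q}\right)^n$, giving $|\solA{L}{\F_q^n}| = |\solA{L}{\F_q}|^n$.

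Next I would compute $|\solA{L}{\F_q}| = q^{k-m}$. Here $\solA{L}{\F_q}$ is exactly the kernel of the linear map $\F_q^k \to \F_q^m$ given by $\bx \mapsto L\bx^T$. Since $L$ is an $(m \times k)$-system, by definition its $m$ rows are linearly independent over $\F_q$, so $\rk(L) = m$ and this map is surjective onto $\F_q^m$. By rank–nullity, $\dim_{\F_q} \ker(L) = k - m$, so $|\solA{L}{\F_q}| = q^{k-m}$. Combining with the previous paragraph yields $|\solA{L}{\F_q^n}| = \left(q^{k-m}\right)^n = q^{n(k-m)}$, as claimed.

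There is no real obstacle here; the statement is essentially a bookkeeping consequence of linearity together with the rank hypothesis built into the definition of an $(m \times k)$-system. The only point requiring a word of care is the first decoupling step — making precise that a linear system with base-field coefficients acting on $(\F_q^n)^k$ splits coordinate-wise — but this is exactly the array picture already set up in the paragraph preceding the statement, so it can be invoked directly rather than reproven.
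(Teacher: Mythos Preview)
Your proposal is correct and follows exactly the approach the paper takes: the paper invokes the $(n \times k)$-array correspondence to get $|\solA{L}{\F_q^n}| = |\solA{L}{\F_q}|^n$, and then cites linear independence of the rows of $L$ (i.e.\ rank--nullity) for $|\solA{L}{\F_q}| = q^{k-m}$. Your write-up simply spells out these two steps in more detail than the paper does.
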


For a function $f: \Fqn \to \R$ define the {\em density of solutions} to a system $L(x_1,\ldots,x_k)$ with respect to $f$ to be
\begin{equation}\label{eq:lamf}
\Lambda_{L} (f) := \frac{1}{|\solA{L}{\Fqn}|}\sum_{\bx \in \solA{L}{\Fqn}} f(x_1)f(x_2) \dots f(x_k), 
\end{equation}
and let $\com_L(f) := \Lambda_L(1/2 + f) + \Lambda_L(1/2 - f).$
Throughout the paper we will work with the following `functional' definition of commonness,  
which we will see is equivalent to the version stated in the introduction. 
\begin{prop}\thlabel{def:fctdef}
Let $L$ be an irredundant $k$-variable system over $\mathbb{F}_q.$ Then $L$ is {\em common} if and only if for every $n$ and every $f: \Fqn \to [-1/2, 1/2]$, we have
$\com_L(f) 
\geq 2^{1-k}.$
\end{prop}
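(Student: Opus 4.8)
The plan is to identify two-colourings of $\F_q^n$ with $\{-1/2,1/2\}$-valued functions, and then to bridge the two remaining gaps between the formulations: the ``asymptotic'' quantifier in the introduction, and the relaxation from $\{-1/2,1/2\}$-valued to $[-1/2,1/2]$-valued test functions. First I would record the dictionary: a two-colouring is given by the set $A \se \F_q^n$ of elements receiving the first colour, and setting $f := \ind_A - 1/2 \in \{-1/2,1/2\}^{\F_q^n}$ we have $\ind_A = 1/2 + f$ and $\ind_{\compl A} = 1/2 - f$, so the density of monochromatic solutions of $L$ equals $\Lambda_L(\ind_A) + \Lambda_L(\ind_{\compl A}) = \com_L(f)$. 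Next I would pin down the expected density in a uniformly random colouring: it equals $|\solA{L}{\F_q^n}|^{-1}\sum_{\xv} 2^{1-d(\xv)}$, where $d(\xv)$ is the number of distinct coordinates of $\xv$. Using \thref{ob:n-vec} together with the coordinatewise structure of $\solA{L}{\F_q^n}$ (a solution over $\F_q^n$ is an $n$-tuple of independent solutions over $\F_q$), irredundancy of $L$ bounds, for each $i \ne j$, the number of solutions with $x_i = x_j$ by $q^{n(k-m-1)}$; hence the number of \emph{degenerate} solutions (those with two equal coordinates) is at most $\binom{k}{2} q^{n(k-m-1)} = O(q^{-n})\,|\solA{L}{\F_q^n}|$, and the expected density is $2^{1-k} + O(q^{-n})$.

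Granting this, the direction ``functional inequality $\Rightarrow$ common'' is immediate: if $\com_L(f) \ge 2^{1-k}$ for every $n$ and every $f \colon \F_q^n \to [-1/2,1/2]$, then every colouring has monochromatic density $\ge 2^{1-k} \ge (\text{expected density}) - O(q^{-n})$. For the converse, assume $L$ is common and suppose, towards a contradiction, that $\com_L(f_0) = 2^{1-k} - \delta$ for some $\delta > 0$ and some $\{-1/2,1/2\}$-valued $f_0$ on $\F_q^{n_0}$. For each $n \ge n_0$ I would pull $f_0$ back along the coordinate projection $\pi \colon \F_q^n \to \F_q^{n_0}$. Since $L$ acts coordinatewise, $\pi$ maps $\solA{L}{\F_q^n}$ onto $\solA{L}{\F_q^{n_0}}$ with all fibres of equal size, so $\Lambda_L(g \circ \pi) = \Lambda_L(g)$ for every $g$, and therefore $\com_L(f_0 \circ \pi) = \com_L(f_0) = 2^{1-k} - \delta$. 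Thus for arbitrarily large $n$ there is a two-colouring of $\F_q^n$ of monochromatic density $2^{1-k} - \delta$, i.e.\ at least $\delta$ below the expected density --- contradicting commonness. Hence $\com_L(f) \ge 2^{1-k}$ for every $n$ and every $f \colon \F_q^n \to \{-1/2,1/2\}$.

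It remains to remove the restriction to $\{-1/2,1/2\}$-valued functions, which I would do by random rounding. Given $f \colon \F_q^n \to [-1/2,1/2]$, define a random $\tilde f \colon \F_q^n \to \{-1/2,1/2\}$ by taking $\tilde f(x) = 1/2$ with probability $1/2 + f(x)$, independently over $x$, so that $\E[1/2 \pm \tilde f(x)] = 1/2 \pm f(x)$. Expanding $\Lambda_L(1/2 \pm \tilde f) = |\solA{L}{\F_q^n}|^{-1}\sum_{\xv} \prod_i (1/2 \pm \tilde f(x_i))$ and taking expectations, each nondegenerate solution reproduces $\prod_i (1/2 \pm f(x_i))$ exactly by independence, while the $O(q^{-n})$-proportion of degenerate solutions changes the value by $O(q^{-n})$; hence $\E[\com_L(\tilde f)] = \com_L(f) + O(q^{-n})$, with the implied constant depending only on $k$. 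Choosing a rounding with $\com_L(\tilde f) \le \E[\com_L(\tilde f)]$ and applying the previous paragraph gives $\com_L(f) \ge 2^{1-k} - O(q^{-n})$. To delete this error I would amplify by pull-backs once more: for a fixed $f$ on $\F_q^{n_0}$ and any $n \ge n_0$, the pull-back $f \circ \pi$ to $\F_q^n$ satisfies both $\com_L(f \circ \pi) = \com_L(f)$ and $\com_L(f \circ \pi) \ge 2^{1-k} - O(q^{-n})$; letting $n \to \infty$ yields $\com_L(f) \ge 2^{1-k}$, as required.

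The step I expect to be the main obstacle is controlling the degenerate solutions in the rounding argument: random rounding does not commute with repeated variables, so averaging only delivers the inequality up to an error of order $q^{-n}$. This is circumvented by the pull-back amplification, which is available precisely because $\Lambda_L$, and hence $\com_L$, is invariant under composing test functions with coordinate projections $\F_q^n \to \F_q^{n_0}$. It is exactly here --- and in identifying the expected density as $2^{1-k} + o(1)$ --- that the irredundancy hypothesis on $L$ is used, since it is what guarantees that the degenerate solutions form a negligible fraction of $\solA{L}{\F_q^n}$.
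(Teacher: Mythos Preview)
Your proposal is correct and follows essentially the same approach that the paper defers to~\cite{fpz19} for: the identification of colourings with $\{\pm 1/2\}$-valued functions, the random-rounding step to pass from $\{\pm 1/2\}$-valued to $[-1/2,1/2]$-valued test functions, and the use of irredundancy to bound the proportion of degenerate solutions (with the tensor/pull-back invariance $\Lambda_L(g\circ\pi)=\Lambda_L(g)$ to eliminate the resulting $O(q^{-n})$ error). The paper itself does not spell out the argument beyond one direction and a pointer to~\cite{fpz19}, so your write-up is more detailed than the paper's own treatment, but the underlying method is the same.
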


Let us briefly explain why \thref{def:fctdef} holds. %is equivalent to the definition of commonness given in the introduction for non-degenerate systems. 
Let $L$ be an irredundant system such that for every $n$ and every $f: \Fqn \to [-1/2, 1/2]$, we have
$\com_L(f) \geq 2^{1-k}.$ Then for every $n$ and every $A \subseteq \Fqn$ we have 
\begin{equation}\label{eq:commonSet} 
|\sol(L;A)| + |\solA{L}{\compl{A}}| \ge 2^{1-k}|\solA{L}{\Fqn}|,\end{equation} 
where $\compl{A}=\Fqn\sm A.$ This can be easily seen by taking $f=\ind_A-1/2.$ % in \thref{def:fctdef}.  
That is, $L$ is common according to the definition given in the introduction. 
\thref{def:fctdef} states that, in fact, this set-theoretic definition of a system to be common is equivalent to the functional definition we give above. The argument for a 1-equation system can be found in~\cite{fpz19} which translates directly to our setting as long as $L$ is irredundant. 
We observe that the left-hand side of~\eqref{eq:commonSet} is simply the number of monochromatic solutions in a 2-colouring of $\Fqn$ given by the partition $(A,\compl{A})$. Now let $P$ be a random subset of $\Fqn$ that contains every $y\in \Fqn$ with probability $1/2.$ Then the expected number of monochromatic solutions is equal to $(2^{1-k}-o_n(1))|\solA{L}{\Fqn}|$, where $o_n(1)$  accounts for the proportion of $\xv\in (\Fqn)^k$  with $L(\xv) = 0$ and not all $k$ coordinates distinct, which goes to 0 as $n\to\infty$ for any irredundant system $L.$ 

We now briefly remark why it suffices to restrict our attention to the consideration of irredundant systems.  Let $L(x_1,\ldots,x_k)$ be an irredundant (that is, it does not induce the equation $x_i - x_j = 0$, for some $i \neq j$) system and let $L'(x_1,\ldots,x_{k+1})$ be the system obtained from $L$ by including the form $x_k - x_{k+1}$. For $n\ge 1,$
let $A \subseteq \Fqn$ and observe that there is a clear one-to-one correspondence between the solutions to $L=0$ in $A$ and the solutions to $L'=0$ in $A$. 
We deduce that $L$ is common if and only if $L'$ is. Note in light of \thref{def:fctdef}, that for a redundant $k$-variable system, the benchmark for commonness is no longer $2^{1-k}.$

Let us briefly comment on the case when $m=k.$ In this case, the only solution to $L=0$ is the all zero solution (since we assume linear independence of the rows), so $\solA{L}{A} +\solA{L}{\compl{A}}= 1$ for any $A\se \Fqn,$ and $|\solA{L}{\Fqn}|=1.$ As a result we obtain the following. 
\begin{obs}\thlabel{rem:mk}
	Let $L$ be a $(k \times k)$-system where $k \ge 2$. 
	Then $L$ is common.  
\end{obs}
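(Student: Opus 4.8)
The plan is to verify the functional definition of commonness from \thref{def:fctdef} directly, using the trivial structure of the solution set. Since $L$ is a $(k \times k)$-system, its coefficient matrix has $k$ linearly independent rows over $\F_q$, hence is invertible. Therefore the only solution to $L\bx^T = 0$ over $\F_q^n$ is $\bx = 0$, i.e. $x_1 = x_2 = \dots = x_k = 0$ (applying this coordinatewise, or simply noting invertibility over the field $\F_q$ extends to $\F_q^n$ viewed as a $\F_q$-vector space). Consequently $|\solA{L}{\F_q^n}| = 1$ for every $n$, which is also consistent with \thref{ob:n-vec} since $q^{n(k-m)} = q^0 = 1$.

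Next I would compute $\Lambda_L(g)$ for an arbitrary $g : \F_q^n \to \R$. Since the unique solution has all coordinates equal to $0$, the sum in \eqref{eq:lamf} has a single term, giving $\Lambda_L(g) = g(0)^k$. Now take $g = 1/2 + f$ and $g = 1/2 - f$ for a function $f : \F_q^n \to [-1/2,1/2]$, so that
\[
\com_L(f) = \left(\tfrac12 + f(0)\right)^{k} + \left(\tfrac12 - f(0)\right)^{k}.
\]
Writing $t = f(0) \in [-1/2,1/2]$, it remains to show $(1/2+t)^k + (1/2-t)^k \ge 2^{1-k}$ for all such $t$, which would establish commonness via \thref{def:fctdef}. (One should also note $L$ is irredundant: it cannot induce $x_i - x_j = 0$ since its only solution has $x_i = x_j = 0$ trivially — but more carefully, inducing $x_i-x_j=0$ is automatic here as $x_i = x_j$ holds for the unique solution; however the benchmark $2^{1-k}$ is still the right one because a $(k\times k)$-system in the set formulation satisfies $|\solA{L}{A}| + |\solA{L}{\compl A}| = 1 \ge 2^{1-k}|\solA{L}{\F_q^n}| = 2^{1-k}$ directly, so one can bypass \thref{def:fctdef} entirely.)

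The only real content is the elementary inequality $(1/2+t)^k + (1/2-t)^k \ge 2^{1-k}$ for $t \in [-1/2,1/2]$ and $k \ge 2$. I would prove this by convexity: the function $u \mapsto u^k$ is convex on $\R$ (for $k$ even) or at least the map $t \mapsto (1/2+t)^k + (1/2-t)^k$ is convex and even in $t$, hence minimised at $t = 0$, where it equals $2 \cdot (1/2)^k = 2^{1-k}$. Alternatively, one can invoke the set-theoretic inequality directly: for any $A \subseteq \F_q^n$, exactly one of the two sets $\solA{L}{A}$, $\solA{L}{\compl A}$ contains the all-zero tuple (depending on whether $0 \in A$), so their sizes sum to $1$, and since $|\solA{L}{\F_q^n}| = 1$ the bound \eqref{eq:commonSet} reads $1 \ge 2^{1-k}$, which holds as $k \ge 2$ gives $2^{1-k} \le 1/2 < 1$. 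Either route immediately yields that $L$ is common.

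I do not anticipate any genuine obstacle here; the statement is essentially a sanity check confirming that systems with a full-rank square coefficient matrix are degenerate. The only mild subtlety is bookkeeping around the benchmark value for (ir)redundant systems, but since the single monochromatic solution gives the bound $1 \ge 2^{1-k}$ with plenty of room to spare, no delicate argument is needed.
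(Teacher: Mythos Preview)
Your proposal is correct, and your alternative set-theoretic route is exactly what the paper does: it simply observes that the only solution is the all-zero vector, so $|\sol(L;A)| + |\sol(L;\compl{A})| = 1 \ge 2^{1-k} = 2^{1-k}|\sol(L;\F_q^n)|$ for any $A$. Your primary functional route via convexity of $t \mapsto (1/2+t)^k + (1/2-t)^k$ is a valid detour, and you rightly notice the redundancy wrinkle (a $(k\times k)$-system vacuously induces $x_i - x_j = 0$, so \thref{def:fctdef} does not literally apply); this is precisely why the paper bypasses the functional formulation here and works with \eqref{eq:commonSet} directly.
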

Thus when characterising conditions that force uncommonness in systems of multiple equations, it suffices to consider $(m \times k)$-systems, for $2 \le m < k$.

\subsection{Discrete Fourier Analysis}

Here we introduce some concepts from discrete Fourier analysis that will be used later (in Subsection~\ref{subsec:even} and Section~\ref{sec:ProofOfMain1}). We remark that the following definitions can be generalised to any finite abelian group, but as we are only concerned $\F_q^n$ here, we state them in this setting. More details may be found in~\cite{tv2006}.
 
The set of homomorphisms from $\F_q^n$ to $\C^{\times}$ forms a group and is denoted by $\widehat{\F_q^n}$. Now,  $\Fqn$ and $\widehat{\Fqn}$ can be seen to be isomorphic by identifying each $r \in \F_q^n$ with the \emph{character} $\chi_r$ that maps $x$ to $\omega^{\Tr( r \cdot x )},$ where $\omega = \exp(2\pi i/p),$ $p$ is the characteristic of $\F_q,$ $\Tr:\F_q\to\F_p$ is the standard trace map (which is non-degenerate and linear as a map between vector spaces over $\F_p$), and where $r \cdot x$ denotes the standard dot product $\F_q^n\times \F_q^n\to \F_q$. See for example~\cite{finite-fields-book} for more details. 
When there is no danger of confusion, $r$ is used to denote $\chi_r$. For a function $f: \F_q^n \rightarrow \C$, the \emph{Fourier transform} of $f$ is the function $\widehat{f}: \widehat{\F_q^n} \rightarrow \C$, defined by
	$$\widehat{f}(r) := \E_{x \in \F_q^n}\; f(x)\:\overline{r(x)} = \E_{x \in \F_q^n}\; f(x)\:\omega^{- \Tr( r\cdot x)},$$
where we use $\E_{x\in A} g(x)=\sum_{x\in A} g(x)/|A|$ to denote \footnote{We remark that in Lemma~\ref{l:random-fcs}, $\E$ is used to denote the expectation of a non-uniform random variable, but this is the only use of this notation in this way.} 
the average of $g$ over all $x\in A$.
The values $\widehat{f}(r)$ are called the \emph{Fourier coefficients} of $f$.

Let $L(x_1,\ldots,x_k)$ be the single equation system $a_1x_1 + \dots + a_kx_k = 0$ with coefficients $a_i \in \F_q \setminus \{ 0\}$ and let $f:\F_q^n \rightarrow \R$. A straightforward application of the definitions gives the standard equality
\begin{equation}\label{eq:single-eq}
	\Lambda_L(f) = \sum_{r \in \widehat{\F_q^n}}\widehat{f}(a_1r)\ldots\widehat{f}(a_k r),	
\end{equation}
see also~\cite{fpz19}. This is one example of how Fourier analysis can yield powerful tools for dealing with linear systems consisting of a single equation. Indeed, this relationship is strongly utilised in \cite{fpz19}, where the authors find a function $f$ with suitable Fourier  coefficients to show uncommonness of a single equation.

\subsection{Critical sets and subsystems}

Our main theorem (Theorem~\ref{t:main-uncommon}), which is applied to prove both Theorem~\ref{t:4APs} and Theorem~\ref{t:main-evens}, relates the problem of determining whether a system is uncommon to that of understanding the uncommonness of particular subsystems corresponding to \emph{critical sets}. The aim of this section is to motivate and introduce these sets and subsystems, before we formally state and prove Theorem~\ref{t:main-uncommon} in the next section.

Let $q$ be a prime power, let $m < k$ be integers and let $L$ be an $(m \times k)$-system over $\F_q$. 
For a set $B=\{i_1,\ldots,i_{\ell}\}\se [k]$ and an $\ell$-variable system $L',$ we say that {\em $L$ induces $L'$ on $B$} if, for every ${\bf x} \in \F_q^k$, $L(x_1, \dots, x_k)=0$ implies that $L'(x_{i_1},\ldots, x_{i_{\ell}})=0.$ Thus, $L$ induces $L'$ as a subsystem (as defined in the introduction) if it induces $L'$ on $B$ for some $B\se[k].$

Let us briefly comment on an equivalent viewpoint on induced subsystems via  coefficient matrices. Suppose that an $(m\times k)$-system $L$ induces an $(m' \times \ell)$-system $L'$ on a set $B,$ and let $M'$ be a matrix representing $L'$. We claim that then there is a matrix $M$ representing $L$ such that $M'$ is a submatrix of $M$ on the columns corresponding to $B$, and on rows $[m']$. Moreover, $M_{ij} = 0$ whenever $i \in [m']$ and $j \notin B.$ To see this, note first that $M$ is a matrix representing $L$ whenever the rows of $M$ form a basis for the orthogonal complement of $\sol(L;\F_q)=\ker(M)$ in $\F_q^k$. Similarly, the rows of $M'$ are a basis for the orthogonal complement of $\ker(M')$ in $\F_q^\ell,$ since $M'$ represents $L'.$
 Now form a matrix $\widetilde{M'}$ by adding $k - \ell$ all-zero columns to the right of $M'$. The crucial observation is that the rows of $\widetilde{M'}$ are in the orthogonal complement of $\sol(L)$. Indeed, any $(x_1\ldots,x_k)\in \sol(L)$ satisfies $L'(x_{i_1}\ldots,x_{i_\ell}) = 0$ since $L$ induces $L',$ which then implies that $\widetilde M' (x_1\ldots,x_k)^\top=0.$ 
 Therefore, the row vectors of $\widetilde{M'}$ can be extended to a basis of $\sol(L;\F_q)^\perp,$ which can then be taken as the row vectors of a matrix $M$ which has the required form. 

Recall that $s(L)$ denotes the minimal length of an equation induced by $L$. We remark that $s(L) \leq s(L')$ whenever $L$ induces $L'.$ As mentioned above, the parity of $s(L)$ plays an important part in our proofs. With this in mind, define 
$$	c(L) :=
\begin{cases}
	s(L) &\text{if $s(L)$ is even,}\\
	s(L) + 1 &\text{if $s(L)$ is odd.}
\end{cases}$$
Call a set $B \subseteq [k]$ \emph{critical for $L$} if $|B| = c(L)$ and there exists a system $L '$ that is induced by $L$ on $B$. 
Let $\mathcal{C}(L)$ denote the family of sets that are critical for $L$. 
Call a system $L'$ \emph{critical for L} (or simply \emph{critical}) when it is  induced by $L$ on a critical set. Note that $L$ may itself be critical (for example, if it is a single equation of even length) or $L$ may have no critical sets (for example, if $L$ is a single equation of odd length). 
For $B\in\Cc(L),$ let $m_B$ be the maximal $m'$ such that there exists an $(m'\times b)$-system $L'$ such that $L$ induces $L'$ on $B,$ where $b=|B|.$ 

We will now state some fundamental properties of critical sets and systems. We call two $k$-variable systems $L$ and $L'$ equivalent if $L$ induces $L'$  and $L'$ induces $L.$ From the perspective of matrices over $\Fqn,$ the systems $L$ and $L '$ are equivalent if one can be obtained from the other by a sequence of elementary row operations. The proof of the next lemma will be given after we discuss some important consequences. We include the assumption $m\ge 2$ merely to avoid technicalities when $L$ is an odd equation. (Recall that single-equation systems are already fully characterised in terms of commonness.) 
 
\begin{lem}\thlabel{lem:useful}
 	Let $q$ be a prime power, let $2\le m \le k$ be integers and let $L$ be an $(m \times k)$-system over $\F_q$. Let $B \in \mathcal{C}(L)$. Then the following hold.
 	\begin{enumerate}[(i)]
 		\item The $(m_B\times c(L))$-system induced by $L$ on $B$ is unique (up to equivalence). 
 		\item If $s(L)$ is even, then $m_B = 1.$ 
 		\item If $s(L)$ is odd, then $m_B\in\{1,2\}.$ 
 	\end{enumerate}
 \end{lem}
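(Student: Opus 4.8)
The plan is to analyze the induced subsystem on a critical set $B$ via the matrix picture developed just before the lemma, and to extract the constraint on $m_B$ from the \emph{minimality} of $s(L)$ together with the parity of $c(L)$.

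\medskip

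\textbf{Setup and part (i).} First I would fix $B \in \mathcal{C}(L)$ with $b := |B| = c(L)$, and recall that by definition $m_B$ is the largest $m'$ for which some $(m'\times b)$-system is induced by $L$ on $B$. Part (i) should follow from a general principle: if $L$ induces $(m'\times b)$-systems $L'$ and $L''$ on the same set $B$, then by the matrix discussion preceding the lemma both $L'$ and $L''$ correspond to subspaces of $\sol(L;\F_q)^\perp$ obtained by restricting to the coordinates in $B$ (padding with zero columns outside $B$). Concretely, the set of forms on variables indexed by $B$ that are induced by $L$ is exactly the space $W_B := \{ w \in \F_q^B : \widetilde{w} \in \sol(L;\F_q)^\perp \}$, where $\widetilde{w}$ is $w$ padded by zeros. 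This $W_B$ is a single well-defined subspace, so the maximal induced system on $B$ is the one whose row span is all of $W_B$, and $m_B = \dim W_B$; any $(m_B \times b)$-system induced by $L$ on $B$ must have row span equal to $W_B$, hence is unique up to row operations, i.e. up to equivalence. This also shows $m_B$ is well-defined as $\dim W_B$, which is the key object for parts (ii) and (iii).

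\medskip

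\textbf{Parts (ii) and (iii): bounding $\dim W_B$.} The heart of the argument is that $\dim W_B$ cannot be too large without contradicting the minimality of $s(L)$. Suppose $\dim W_B = m_B \ge 2$ and pick two linearly independent forms $E_1, E_2 \in W_B$, i.e. two independent equations induced by $L$ on $B$, each of length $\le b = c(L)$. Now take any nontrivial linear combination $E = \lambda_1 E_1 + \lambda_2 E_2$: it is again induced by $L$, and its support is contained in $\mathrm{supp}(E_1) \cup \mathrm{supp}(E_2) \subseteq B$. If $s(L)$ is even, then $c(L) = s(L)$, so every induced equation has length $\ge s(L) = b$, forcing $\mathrm{supp}(E) = B$ for \emph{every} nonzero $E \in W_B$; I would argue this is impossible for a $2$-dimensional space over $\F_q$ with $q \ge 2$ (there are at least two essentially different one-dimensional directions, and one can cancel a coordinate by choosing $\lambda_1, \lambda_2$ to kill the leading entry in some position where $E_1, E_2$ are not proportional — producing a shorter induced equation, contradiction). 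Hence $m_B = 1$ when $s(L)$ is even, giving (ii). When $s(L)$ is odd, $c(L) = s(L)+1$, so induced equations have length $\ge s(L) = c(L) - 1 = b-1$; the same cancellation argument shows that no nonzero $E \in W_B$ can have support of size $\le b - 2$, i.e. every nonzero induced equation on $B$ has length exactly $b-1$ or $b$. A space of forms on $b$ coordinates in which every nonzero vector has at most one zero entry can have dimension at most $2$ — roughly, if $\dim W_B \ge 3$ one can take a combination vanishing on two prescribed coordinates by solving a $2\times 3$ linear system — which yields $m_B \in \{1,2\}$ and hence (iii).

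\medskip

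\textbf{Main obstacle.} I expect the technical crux to be the last combinatorial/linear-algebraic step: precisely bounding $\dim W_B$ given the constraint ``every nonzero vector has support of size $\ge b$'' (even case) or ``$\ge b-1$'' (odd case) on $b$ coordinates over $\F_q$. The even case should be clean — a $2$-dimensional space of full-support vectors in $\F_q^b$ cannot exist because the quotient of two independent full-support vectors, coordinatewise, takes only finitely many values but the space contains $q+1$ projective points. The odd case is slightly more delicate because one must simultaneously control that combinations don't drop \emph{two} coordinates; here I would phrase it as: the projection of $W_B$ onto any two coordinates must be all of $\F_q^2$ (else some nonzero vector is supported on $\le b-2$ coordinates, contradicting minimality of $s(L)$), and a subspace whose projection to every pair of coordinates is surjective, yet which avoids vectors of support $< b-1$, forces $\dim W_B \le 2$. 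Getting this dimension bound stated cleanly, rather than via ad hoc casework, will be where the real work lies; everything else is bookkeeping with the matrix correspondence already set up in the paper.
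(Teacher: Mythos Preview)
Your proof is correct and follows essentially the same route as the paper: part (i) via the observation that the induced forms on $B$ constitute a single subspace $W_B$ (the paper phrases this as a maximality contradiction, but it is the same content), and parts (ii)--(iii) via the cancellation argument that two (respectively three) independent induced equations on $B$ admit a nonzero combination of length $< s(L)$.

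Your worry in the ``main obstacle'' paragraph is misplaced: you already gave the clean argument a few lines earlier (``solving a $2\times 3$ linear system''), and that is all the paper does too. In the even case, pick any coordinate $i\in B$ and form $(E_2)_i E_1 - (E_1)_i E_2$; in the odd case with $\dim W_B\ge 3$, project to any two coordinates and use rank--nullity. Your alternative phrasings about projective points and surjectivity of projections are unnecessary and the latter is slightly garbled (surjectivity of the projection is not what you want; nontriviality of its kernel is), but since you already had the direct argument this is not a gap.
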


Item (i) allows us to make the following definition. 
\begin{defn}[The critical system $L_B$]
Given an $(m \times k)$-system $L$ over $\F_q$ and a critical set $B\se\Cc(L),$ define $L_B$ to be the $(m_B\times k)$-system that is induced by $L$ on $B$. 
\end{defn}

\thref{lem:useful} implies that for any critical $B$, the critical system $L_B$ is of rank one or two. 
The key property about the systems $L_B$ is expressed in the following lemma which will enable us to convert the problem of showing that a system is uncommon to showing that the critical subsystems satisfy particular properties. 

\begin{lem}\thlabel{lem:useful4}
Let $q,$ $m$, $k$, $L$ and $B$ be as in \thref{lem:useful}. Any solution to $L_B$ in $\F_q^n$ extends to $q^{n(k - c(L) - m + m_B)}$ solutions of $L$.  
\end{lem}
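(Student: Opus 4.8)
The plan is to count solutions of $L$ that restrict to a fixed solution of $L_B$ on the coordinates in $B$ by a dimension-counting argument on the solution space $\sol(L;\F_q^n)$. First I would set up coordinates: let $b = |B| = c(L)$ and, after reordering variables, assume $B = \{1, \dots, b\}$. Recall from \thref{ob:n-vec} that $|\sol(L;\F_q^n)| = q^{n(k-m)}$ and that, since $L_B$ is an $(m_B \times b)$-system, $|\sol(L_B;\F_q^n)| = q^{n(b - m_B)}$. The projection map $\pi \colon \sol(L;\F_q^n) \to (\F_q^n)^b$ that forgets the last $k - b$ coordinates has image contained in $\sol(L_B;\F_q^n)$, because $L$ induces $L_B$ on $B$. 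The key claims are then (a) this map is $\F_q$-linear (clear, since both solution sets are $\F_q$-subspaces and $\pi$ is a coordinate projection), and (b) $\pi$ is surjective onto $\sol(L_B;\F_q^n)$; given (a) and (b), every fibre has the same size $|\sol(L;\F_q^n)| / |\sol(L_B;\F_q^n)| = q^{n(k-m)} / q^{n(b-m_B)} = q^{n(k - c(L) - m + m_B)}$, which is exactly the claimed count.

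The main obstacle is establishing surjectivity (b) — that \emph{every} solution of $L_B$, not just those in the image, extends to a solution of $L$. For this I would use the matrix description of induced subsystems developed in the paragraph preceding \thref{lem:useful}: there is a matrix $M$ representing $L$ of the block form
\[
M = \begin{pmatrix} M' & 0 \\ P & Q \end{pmatrix},
\]
where $M'$ is an $(m_B \times b)$-matrix representing $L_B$ (here I use \thref{lem:useful}(i) to identify the induced system on $B$ with $L_B$), the top-right block is the $(m_B \times (k-b))$ zero matrix, and $\begin{pmatrix} P & Q\end{pmatrix}$ is an $((m - m_B) \times k)$-matrix. The rank of $Q$ (acting on $(\F_q^n)^{k-b}$, equivalently the $\F_q$-rank of the block $Q$) is exactly $m - m_B$: indeed the rows of $M$ are linearly independent, so the $m - m_B$ rows of $\begin{pmatrix} P & Q \end{pmatrix}$ are independent modulo the row span of $\begin{pmatrix} M' & 0\end{pmatrix}$, which forces the $Q$-part to have full row rank $m - m_B$. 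Now fix $\mathbf{y} = (y_1, \dots, y_b) \in \sol(L_B;\F_q^n)$, so $M' \mathbf{y}^\top = 0$. To extend $\mathbf{y}$ to a solution of $L$ we must find $\mathbf{z} = (z_{b+1}, \dots, z_k) \in (\F_q^n)^{k-b}$ with $Q \mathbf{z}^\top = -P \mathbf{y}^\top$. Since $Q$ has full row rank $m - m_B$ over $\F_q^n$ (equivalently over $\F_q$, coordinatewise), the linear map $\mathbf{z} \mapsto Q\mathbf{z}^\top$ is surjective onto $(\F_q^n)^{m - m_B}$, so such a $\mathbf{z}$ exists; moreover the top block is automatically satisfied since $M'\mathbf{y}^\top = 0$ and the top-right block is zero. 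Hence $(\mathbf{y}, \mathbf{z}) \in \sol(L;\F_q^n)$ and $\pi(\mathbf{y},\mathbf{z}) = \mathbf{y}$, proving surjectivity.

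Finally I would assemble the pieces: by (a) the fibres of $\pi$ are cosets of the kernel $\ker \pi = \{(\mathbf 0, \mathbf z) : Q\mathbf z^\top = 0\}$, which has size $q^{n((k-b) - (m - m_B))} = q^{n(k - c(L) - m + m_B)}$; by (b) the fibre over each solution of $L_B$ is nonempty, hence a full coset. Therefore the number of solutions of $L$ extending any fixed solution of $L_B$ is $q^{n(k - c(L) - m + m_B)}$, as required. A sanity check: summing over all $q^{n(b - m_B)}$ solutions of $L_B$ recovers $q^{n(b - m_B)} \cdot q^{n(k - c(L) - m + m_B)} = q^{n(k - m)} = |\sol(L;\F_q^n)|$, consistent with \thref{ob:n-vec}.
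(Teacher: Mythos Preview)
Your proof is correct and takes essentially the same approach as the paper's: both place the coefficient matrix in block form with $L_B$ occupying $m_B$ rows supported on $B$, and then count extensions by noting that the remaining $(m-m_B)\times(k-c(L))$ block has full row rank (the paper phrases this as the residual affine system having rank $m-m_B$, you phrase it via surjectivity of the projection $\pi$ and the size of its kernel). The only minor point is that your deduction ``independence of the rows of $(P\;Q)$ modulo the row span of $(M'\;0)$ forces the $Q$-part to have full row rank'' tacitly relies on the maximality of $m_B$---one needs that any vector of the form $(p,0)$ in the row span of $M$ already lies in the span of $(M'\;0)$, which is exactly what maximality guarantees---but the paper glosses over precisely the same step when it asserts the rank is $m-m_B$.
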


Both lemmas follow from elementary linear algebra. 

\begin{proof}[Proof of \thref{lem:useful}]
	For (i), suppose for contradiction that there are two non-equivalent $(m_B\times \ell)$-systems $L_1$ and $L_2$ induced by $L$ on $B$, where $\ell=c(L)$. Let $v_1,\ldots,v_{m_B}$ be the rows of the matrix $L_1$ and let $u_1,\ldots,u_{m_B}$ be the rows of $L_2$. As $v_1,\ldots,v_{m_B}$ are linearly independent over $\F_q$ and so are $u_1,\ldots,u_{m_B}$, there must be some $u_i$ that is not in the span of $v_1,\ldots,v_{m_B}$. Hence, $m_B$ is not maximal. 
	For (ii) note that there is at least one equation $L'$ that is induced by $L$  on $B$, by definition of $\mathcal{C}(L)$ and since $B \in \mathcal{C}(L)$. Suppose there are two linearly independent equations induced on $B$. Then some  linear combination of these equations has length strictly less than $s(L)$, a contradiction. 
	Now (iii) follows similarly by supposing there are three linearly independent equations induced on $B$, and finding a linear combination of length  strictly less than $s(L)$. 
\end{proof}	
\begin{proof}[Proof of \thref{lem:useful4}]
	Using elementary row operations, we may assume that the matrix $L$ is such that the final $m_B$ rows correspond to the system $L_B$ and the entry $L_{ij}$ is zero for any $i>m-m_B$ and $j\not\in B.$ 
    Let $y = (y_b)_{b \in B}\in (\F_q)^{|B|}$ and let $L(y)$ be the (not necessarily homogeneous) system of equations obtained from $L=0$ by letting the value of $x_b$ be $y_b$ for each $b \in B$.
	If $y \in \solA{L_B}{\F_q}$, then considering the first $m-m_B$ rows of $L(y)$ gives a (not necessarily homogeneous) system on $k-|B| = k- c(L)$ variables of rank $m-m_B$. Thus, the number of solutions to $L=0$ over $\F_q$ extending $y$ is $q^{k- c(L) - m +m_B}$. The claim for a general $y\in\solA{L_B}{\Fqn}$  follows since each row of such a $y$ (seen as an $(n\times k)$-array) is a solution to $L_B=0$ over $\F_q,$ similar to the argument for \thref{ob:n-vec}.
\end{proof}

\section{Reducing the problem to critical subsystems}\label{sec:main-thm}

	The purpose of this section is to prove Theorem~\ref{t:main-uncommon}, which relates the uncommonness of an $(m \times k)$-system $L$ to the cumulative uncommonness of its critical subsystems. Recall the definition of $\Lambda_L(f)$ from \eqref{eq:lamf}. Say that a function $f: \F_q^n \rightarrow \R$ is \emph{balanced} if $\sum_{x \in \F_q^n}f(x) = 0$.
    
    \begin{thm} \thlabel{t:main-uncommon}
    	Let $q$ be a prime power, let $2\le m < k$ be integers,  and let $L$ be an $(m \times k)$-system over $\F_q$. If there exists $n \ge 1$ and a balanced function $f: \Fqn \rightarrow \fint$ such that %$\er f = 0$ and
    	\begin{equation*} %\label{eq:main-uncommon}
    		\sum_{B \in \Cc(L)} \Lambda_{L_B} (f) <0,
    	\end{equation*}
    then $L$ is uncommon.
    \end{thm}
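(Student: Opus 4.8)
The plan is to show that the hypothesis gives, via a small perturbation, an explicit function witnessing the failure of the inequality $\com_L(g) \ge 2^{1-k}$ from \thref{def:fctdef}. The natural candidate is $g = \delta f$ for a small parameter $\delta > 0$; the point of scaling is that $\Lambda_L$ is a degree-$k$ polynomial in the "coordinates" of its argument, so $\com_L(\delta f) = \Lambda_L(1/2 + \delta f) + \Lambda_L(1/2 - \delta f)$ has a Taylor expansion in $\delta$ whose leading non-constant term will carry the sign we want. First I would expand $\Lambda_L(1/2 + \delta f)$ by multilinearity of the sum in \eqref{eq:lamf}: writing $f(x_j) \to 1/2 + \delta f(x_j)$ and expanding the product $\prod_j (1/2 + \delta f(x_j))$, one groups terms by the subset $S \subseteq [k]$ of indices at which the $\delta f$-factor is chosen. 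Adding the $+\delta f$ and $-\delta f$ versions kills all odd-$|S|$ terms, so
\[
\com_L(\delta f) = 2\sum_{\substack{S \subseteq [k] \\ |S| \text{ even}}} \delta^{|S|} \, 2^{-(k-|S|)} \cdot \frac{1}{|\sol(L;\F_q^n)|}\sum_{\bx \in \sol(L;\F_q^n)} \prod_{j \in S} f(x_j).
\]
The $S = \emptyset$ term is $2 \cdot 2^{-k} = 2^{1-k}$, the benchmark. Since $f$ is balanced, I expect every term with $|S| = 2$ whose two indices lie in "independent" columns to vanish after summing over $\sol(L;\F_q^n)$ — more precisely, the inner average $\frac{1}{|\sol|}\sum_{\bx} f(x_i) f(x_j)$ is, by \thref{lem:useful4} and a counting argument, proportional to $\Lambda_{L'}(f)$ where $L'$ is the system induced by $L$ on $\{i,j\}$; if $L$ induces no nontrivial relation on $\{i,j\}$ then $x_i$ and $x_j$ range independently and balancedness forces the average to $0$.

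The key step is identifying which $|S| = c(L)$ terms survive and assembling them into $\sum_{B \in \Cc(L)} \Lambda_{L_B}(f)$. For $|S| < c(L)$: either $L$ induces no equation on $S$, in which case (again by balancedness, since some coordinate in $S$ is free over the others) the inner average is $0$; or $L$ does induce an equation on $S$, but then that equation has length $\le |S| < c(L)$, and if $s(L)$ is even this contradicts minimality of $s(L)$, while if $s(L)$ is odd the length is $\le s(L) - 1 < s(L)$, again a contradiction — so in fact no such $S$ with $2 \le |S| < c(L)$ supports an induced equation, and all these terms vanish. For $|S| = c(L)$: the surviving sets are exactly the critical sets $B \in \Cc(L)$, and for each such $B$, \thref{lem:useful4} says every solution of $L_B$ in $\F_q^n$ extends to exactly $q^{n(k - c(L) - m + m_B)}$ solutions of $L$, so
\[
\frac{1}{|\sol(L;\F_q^n)|}\sum_{\bx \in \sol(L;\F_q^n)} \prod_{j \in B} f(x_j)
= \frac{q^{n(k - c(L) - m + m_B)} \, |\sol(L_B;\F_q^n)|}{|\sol(L;\F_q^n)|} \, \Lambda_{L_B}(f),
\]
and by \thref{ob:n-vec} the prefactor is a positive constant independent of $f$ (indeed it equals $q^{-n\,c(L)}$, noting $|\sol(L_B;\F_q^n)| = q^{n(c(L) - m_B)}$). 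Hence, collecting the $\delta^{c(L)}$ coefficient,
\[
\com_L(\delta f) = 2^{1-k} + 2 \cdot 2^{-(k - c(L))} \delta^{c(L)} q^{-n\,c(L)} \sum_{B \in \Cc(L)} \Lambda_{L_B}(f) + O(\delta^{c(L)+2}),
\]
where the error absorbs all terms with $|S| > c(L)$ (these are bounded since $|f| \le 1/2$).

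Finally I would conclude: under the hypothesis the coefficient of $\delta^{c(L)}$ is strictly negative, so for $\delta > 0$ small enough (and note $\delta f$ still maps into $\fint$), $\com_L(\delta f) < 2^{1-k}$, which by \thref{def:fctdef} shows $L$ is uncommon. One should also check $c(L) \ge 2$ so that the benchmark term is genuinely isolated — this holds because an irredundant system has $s(L) \ge 2$, hence $c(L) \ge 2$; since $m < k$ forces $L$ to have at least one nontrivial induced equation, $\Cc(L) \ne \emptyset$ and $s(L)$ is well-defined. The main obstacle I anticipate is the bookkeeping in the middle step — rigorously ruling out contributions from non-critical $S$ with $2 \le |S| < c(L)$, and verifying that for $|S| = c(L)$ the inner average is genuinely $0$ unless $S$ is critical (i.e.\ unless $L$ induces some system on $S$), which again reduces to the balancedness argument applied to a free coordinate; none of the individual arguments is deep, but care is needed to handle the parity cases uniformly and to confirm the combinatorial prefactor is $f$-independent and positive.
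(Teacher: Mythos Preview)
Your approach is correct and essentially identical to the paper's: both scale $f$ by a small constant, expand $\com_L(\delta f)$ multilinearly over even subsets $S \subseteq [k]$, use balancedness to kill terms where $S$ is not ``rank-reducing'' (your ``$L$ induces no equation on $S$''), identify the surviving $|S|=c(L)$ terms with $\Lambda_{L_B}(f)$ via \thref{lem:useful4}, and choose the scaling so that the $|S|>c(L)$ remainder is dominated. One minor slip: your prefactor $q^{-n\,c(L)}$ should in fact be $1$ (the exponents in $q^{n(k-c(L)-m+m_B)} \cdot q^{n(c(L)-m_B)} / q^{n(k-m)}$ cancel exactly, as in the paper's Claim~\ref{cl:phi}\ref{it:lambda}), though this is harmless since only positivity of the prefactor is used; also, irredundancy is not a hypothesis of the theorem and is not needed for $c(L)\ge 2$.
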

As in \thref{lem:useful}, the assumption $m\ge 2$ is merely included to avoid technicalities in the case when $L$ is an odd equation (and the set $\Cc(L)$ would be empty).
\begin{rem}
		Observe that this theorem relates uncommonness of a system to a condition concerning its critical subsystems, which are subsystems of rank at most two, by Lemma~\ref{lem:useful}. So it yields a potential strategy for showing that a high rank system is uncommon, by understanding certain subsystems of low rank. Note also that the condition does not depend on how these subsystems relate to each other within $L$.
\end{rem}

For the proof of Theorem~\ref{t:main-uncommon}, it is convenient to define the following notation. For any $(m \times k)$-system $L$, any $B \subseteq [k]$ and $f: \Fqn \rightarrow \mathbb{R}$, define 
$$\Phi_L(B, f) :=\frac{1}{|\solA{L}{\Fqn}|}\sum_{\bx \in \solA{L}{\Fqn}} \prod_{i \in B} f(x_i).$$
So $\Phi_L([k],f) = \Lambda_L(f)$ and it follows immediately from the definitions of $\com_L$ and $\Phi_L$ that 
\begin{equation}\label{eq:thomasonst}
	\com_L(f) = 2 \sum_{\substack{B \se [k],\\ |B| \text{ even}}} 2^{-k+|B|}  \Phi_L(B, f).
\end{equation} 

Now we prove Theorem~\ref{t:main-uncommon}.
\begin{proof}[Proof of Theorem~\ref{t:main-uncommon}]
	Let $f$ be a function satisfying the hypotheses of the theorem. Define $\alpha = \alpha(f)$ via the equation
	\begin{equation}\label{eq:alpha}
		-2^{k+2}\alpha = \sum_{B \in \Cc(L)} \Lambda_{L_B}(f).
	\end{equation}
As $|\Lambda_{L_B}(f)| \le 1$ for any $B$, by definition and the theorem hypothesis, we have $0 < \alpha \le 1/4$. Let $g = \alpha f$. Note that as $f:\Fqn \rightarrow [-\frac{1}{2},\frac{1}{2}]$, we have $g: \Fqn \rightarrow [-\frac{1}{2},\frac{1}{2}]$. 
To show that $L$ is uncommon it is sufficient to show that $\com_L(g) < 2^{1-k},$ see \thref{def:fctdef}. 
	By \eqref{eq:thomasonst}, we have
\begin{equation}\label{eq:split}
	\com_L(g) = 2\sum_{%\substack{B \se [k]\\ 
	|B| \text{ even}}2^{-k+|B|}  \Phi_L(B, \alpha f) = 2^{1-k} +  2^{1-k}\!\!\!\sum_{%\substack{\emptyset \not= B \se [k]\\ 
	0 < |B| \text{ even}} (2\alpha)^{|B|}  \Phi_L(B, f), 
\end{equation}
where we omit the condition $B\se[k]$ here and below for clarity. 
It suffices to show that the rightmost sum is negative. We use the first part of the following claim to see that the terms coming from sets with $0<|B|<c(L)$ do not contribute at all. Our choice of $\alpha$ then guarantees that the dominant terms in this sum come from sets in $\mathcal{C}(L)$. This information, together with the second part of the claim and the assumption of the theorem, allows us to deduce the assertion. Call a set $B\se [k]$ {\em rank-reducing} if the matrix obtained from $L$ by removing the columns corresponding to $B$ has rank less than the rank of $L.$

\begin{claim}\label{cl:phi}$ $
    \begin{enumerate} 
		\item~\label{it:generic} If $B$ is not rank-reducing, then $\Phi_L(B, f) = 0$. 
		\item~\label{it:lambda} If $B \in \mathcal{C}(L)$, then $\Phi_L(B, f) = \Lambda_{L_B}(f).$ 
	\end{enumerate}
\end{claim}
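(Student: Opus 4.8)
The plan is to expand $\Phi_L(B,f)$ via the Fourier transform on $\F_q^n$ and isolate the frequencies that survive the averaging over $\sol(L;\F_q^n)$. For each $i\in B$ write $f(x_i)=\sum_{r_i\in\widehat{\F_q^n}}\widehat f(r_i)\,\omega^{\Tr(r_i\cdot x_i)}$, so that
\[
\Phi_L(B,f)=\frac{1}{|\sol(L;\F_q^n)|}\sum_{(r_i)_{i\in B}}\Big(\prod_{i\in B}\widehat f(r_i)\Big)\sum_{\bx\in\sol(L;\F_q^n)}\omega^{\sum_{i\in B}\Tr(r_i\cdot x_i)}.
\]
By \thref{ob:n-vec}, the inner sum is $|\sol(L;\F_q^n)|$ if the linear functional $\bx\mapsto\sum_{i\in B}r_i\cdot x_i$ vanishes on the $\F_q$-subspace $\sol(L;\F_q^n)$, and is $0$ otherwise. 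The condition for it to vanish is precisely that the ``$B$-supported'' row vector $(r_i)_{i\in B}$ (padded with zeros outside $B$) lies in $\sol(L;\F_q^n)^\perp$, i.e.\ in the row space of $L$ over $\F_q^n$ — equivalently, there is a row vector of $L$ (with entries in $\F_q^n$, obtained by $\F_q^n$-linear combinations of the rows of $L$) whose support is contained in $B$. Call such a combination a \emph{$B$-localised relation}. Thus
\[
\Phi_L(B,f)=\sum_{\substack{(r_i)_{i\in B}\ \text{extends to a}\\ B\text{-localised relation of }L}}\ \prod_{i\in B}\widehat f(r_i).
\]

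For part~(i): if $B$ is not rank-reducing, then deleting the columns outside $B$ does not drop the rank, which means no nonzero $\F_q^n$-combination of the rows of $L$ is supported on $B$ — equivalently the only $B$-localised relation is the zero one, forcing $r_i=0$ for all $i\in B$. Since $f$ is balanced, $\widehat f(0)=\E_x f(x)=0$, so every term in the sum above vanishes and $\Phi_L(B,f)=0$. (One should double-check the precise translation between ``$B$ rank-reducing'' and ``some nonzero $\F_q$-combination of the rows of $L$ is supported in $B$'': removing columns outside $B$ lowers the rank iff the kernel of the remaining matrix, viewed inside $\F_q^k$ after re-padding with zeros, meets the row space of $L$ nontrivially, which is the statement we want. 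This is the one spot needing care.)

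For part~(ii): if $B\in\Cc(L)$ with $|B|=c(L)$, then by the discussion of induced subsystems and submatrices in Section~\ref{sec:prelims}, the $B$-localised relations of $L$ over $\F_q$ are exactly the rows of (a matrix representing) $L_B$, spanning an $m_B$-dimensional space; more to the point, an $\F_q^n$-combination of the rows of $L$ is $B$-localised iff, row by row over $\F_q$, it is an $\F_q$-combination of the rows of $L_B$. Hence the frequency tuples $(r_i)_{i\in B}$ appearing in $\Phi_L(B,f)$ are exactly those that extend to an element of the row space of $L_B$ over $\F_q^n$, i.e.\ to an element of $\sol(L_B;\F_q^n)^\perp$ inside $(\F_q^n)^{|B|}$. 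Running the Fourier computation backwards — this time for the system $L_B$ on the $|B|$ variables indexed by $B$, using \thref{ob:n-vec} applied to $L_B$ — gives precisely $\sum \prod_{i\in B}\widehat f(r_i)=\Lambda_{L_B}(f)=\Phi_{L_B}([|B|],f)$. I expect the main obstacle to be bookkeeping: keeping straight the three lattices of frequency tuples (those supported in $B$, those forming a $B$-localised relation of $L$, and those forming a relation of $L_B$) and verifying they coincide, especially the ``row by row over $\F_q$'' reduction from $\F_q^n$-coefficients to $\F_q$-coefficients, which is exactly the array argument used for \thref{ob:n-vec} and \thref{lem:useful4}. Everything else is the routine orthogonality-of-characters identity.
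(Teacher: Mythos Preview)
Your Fourier-analytic approach is correct and genuinely different from the paper's proof. The paper argues by direct counting: for each assignment $b=(b_i)_{i\in B}$ of the $B$-variables it lets $h(b)$ be the number of extensions to a full solution of $L$, so that $\Phi_L(B,f)=q^{n(m-k)}\sum_b h(b)\prod_{i\in B}f(b_i)$. When $B$ is not rank-reducing, $h(b)$ is the constant $q^{n(k-|B|-m)}$ and the sum factors as a power of $\sum_y f(y)=0$. When $B\in\Cc(L)$, \thref{lem:useful4} gives that $h(b)$ equals a fixed nonzero constant on $\sol(L_B)$ and vanishes off it, yielding $\Lambda_{L_B}(f)$ directly.

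Your argument is the dual picture: rather than fibring over the $B$-variables, you expand in characters and identify the surviving frequency tuples as exactly the elements of the row space of $L$ supported in $B$. This is more conceptual --- it makes transparent why critical sets matter (they are precisely the $B$ for which the row space of $L$ has nontrivial $B$-supported elements) --- and it dovetails with the Fourier machinery used later in Section~\ref{sec:ProofOfMain1}. The paper's route is more elementary (no characters) and reuses \thref{lem:useful4}, which is already in hand. One slip to correct: the paper defines ``$B$ rank-reducing'' by deleting the columns \emph{indexed by} $B$, not the columns outside $B$; with that direction the equivalence you flagged (``$B$ not rank-reducing'' $\Leftrightarrow$ ``no nonzero $\F_q$-combination of the rows of $L$ is supported in $B$'') is immediate, since the restricted rows on $[k]\setminus B$ remain independent precisely when no nontrivial row-combination dies outside $B$. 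With that fixed, your proof goes through as written.
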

\begin{proof}[Proof of Claim~\ref{cl:phi}]
	Let $B \subseteq [k]$ and let $b = (b_i)_{i \in B} \in (\Fqn)^{|B|}$. Let $L(b)$ be the (not necessarily homogeneous) system of equations obtained from $L=0$ by letting the value of $x_i$ be $b_i$ for each $i \in B$ and let $h(b)$ be the number of solutions to $L(b)$. Note that $h(b)$ can be 0. By considering each solution of $L$ as an extension of some $b \in (\Fqn)^{|B|}$, we obtain 
	\begin{equation}\label{eq:phiB}
		\Phi_L(B,f) = q^{n(-k+m)}\!\!\!\!\! \sum_{b \in (\Fqn)^{|B|}}\!\!\!\!\!  h(b) \prod_{i \in B}f(b_i).
	\end{equation}
	
	For~\ref{it:generic}, suppose $B$ is such that the matrix obtained from $L$ by removing the columns corresponding to $B$ has the same rank as $L$. Then for any $b \in (\Fqn)^{|B|}$, we have $h(b) = q^{n(k-|B|-m)}$. So \eqref{eq:phiB} implies
$$\Phi_L(B,f) = q^{n(-k+m)} \cdot  q^{n(k-|B|-m)}  \Big(\sum_{y \in \Fqn} f(y)\Big)^{|B|} =0,$$
since $f$ is balanced, as required for~\ref{it:generic}. 
	
For~\ref{it:lambda}, suppose $B\in \mathcal{C}(L)$ and $L_B$ has rank $t$.  By Lemma~\ref{lem:useful4}, we see that $h(b) = q^{n(k- |B| - m + t)}$ if $b\in \sol(L_B)$ and $h(b) = 0$ otherwise. 
 This and~\eqref{eq:phiB} imply that 
$$\Phi_L(B, f) = q^{n(-k+m)} \!\!\!\!\!\sum_{y \in\sol(L_B)}\!\!\!\!\! q^{n(k- |B| - m + t)} \prod_{i\in B}f(y_i) = q^{n(-|B| + t)} \!\!\!\!\!\sum_{y\in\sol(L_B)} \prod_{i\in B}f(y_i) = \Lambda_{L_B}(f),$$
as required.
\end{proof}

If a set $B$ satisfies $0 < |B| < s(L),$ then  $B$ cannot be rank-reducing, by definition of $s(L).$ Similarly, $B$ is not rank-reducing if $|B|=c(L)$ and $B\not\in \Cc(L),$ by definition of $\mathcal{C}(L).$ By Claim~\ref{cl:phi}\ref{it:generic}, any such set~$B$ satisfies $\Phi_L(B,f)=0$, so the only sets that contribute to the rightmost sum of \eqref{eq:split} are in $\mathcal{C}(L)$ or they satisfy $|B|>c(L)$. Partitioning according to membership of $\mathcal{C}(L)$ gives 
	\begin{equation}\label{eq:split1}
	\sum_{
	0< |B| \text{ even}} (2\alpha)^{|B|}  \Phi_L(B, f) = (2\alpha)^{c(L)} \sum_{B \in \Cc(L)}   \Phi_L(B, f) + \sum_{\substack{  
	|B| \text{ even}\\ |B|> c(L)} }  (2\alpha)^{|B|} \Phi_L(B, f).
	\end{equation}
	As $|\Phi_L(B, f)| \le 1$ and $2\alpha \in (0,1)$, we obtain
	\begin{equation}\label{eq:split2}
		\sum_{\substack{%B \se [k] \\  
		|B| \text{ even}\\ |B|> c(L)} } \!\!\!\!\! (2\alpha)^{|B|} \Phi_L(B, f) \le 2^k (2\alpha)^{c(L) + 2} < 2^{k+2}\alpha (2\alpha)^{c(L)} = - (2 \alpha)^{c(L)}\sum_{B \in \Cc(L)}  \Phi_L(B, f),
	\end{equation}
	where in the final equality we combine Claim~\ref{cl:phi}~\ref{it:lambda} with the definition of $\alpha$ (given in \eqref{eq:alpha}) to replace the $2^{k+1}\alpha$ term, and also use the fact that $0 < \alpha \le 1/4$ and every set in $\mathcal{C}(L)$ has even cardinality $c(L)$.
	It immediately follows from \eqref{eq:split1} and \eqref{eq:split2} that the rightmost sum in \eqref{eq:split} is negative, and hence $\com_L(g) < 2^{1-k}$, as required. We can therefore conclude that $L$ is uncommon. 
\end{proof}

We immediately obtain the following consequence of Claim~\ref{cl:phi} together with an equation similar to~\eqref{eq:split}, which is of independent interest and highlights another property of critical systems. Note that we have $c(L)=k$ for a critical $(m\times k)$-system $L.$

\begin{cor}\label{cor:simple} 
		Let $q$ be a prime power, let $L$ be a critical $(m \times k)$-system over $\F_q$ and let $f: \Fqn \rightarrow \mathbb{R}$ be a balanced function. Then
		$$\com_L(f) = 2^{1-k}  + 2\Lambda_{L}(f).$$
\end{cor}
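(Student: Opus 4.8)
The plan is to expand $\com_L(f)$ via the identity~\eqref{eq:thomasonst} and observe that, when $L$ is critical, all but two of the resulting terms vanish. Recall from the remark preceding the statement that a critical $(m\times k)$-system satisfies $c(L)=k$, so $s(L)\in\{k-1,k\}$ and in particular $s(L)\ge k-1$; moreover $k$ is even, since if $s(L)$ is even then $k=s(L)$ is even, and if $s(L)$ is odd then $k=s(L)+1$ is even. Fix a balanced $f:\Fqn\to\R$. By~\eqref{eq:thomasonst},
$$\com_L(f)=2\sum_{\substack{B\se[k]\\ |B|\text{ even}}}2^{-k+|B|}\,\Phi_L(B,f),$$
and I would first isolate the two extreme terms: the set $B=\emptyset$ contributes $2\cdot 2^{-k}\cdot\Phi_L(\emptyset,f)=2^{1-k}$, since $\Phi_L(\emptyset,f)$ is an empty product and equals $1$, while $B=[k]$ contributes $2\cdot 2^{0}\cdot\Phi_L([k],f)=2\Lambda_L(f)$ directly from the definitions of $\Phi_L$ and $\Lambda_L$.

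It then remains to show that every other even set $B$ with $0<|B|<k$ satisfies $\Phi_L(B,f)=0$. For such a $B$, since $k$ is even we have $|B|\le k-2\le s(L)-1<s(L)$, so $B$ is not rank-reducing (any set of size smaller than $s(L)$ is not rank-reducing, by the definition of $s(L)$, exactly as in the proof of Theorem~\ref{t:main-uncommon}). Hence Claim~\ref{cl:phi}\ref{it:generic}, whose proof uses only that $f$ is balanced, gives $\Phi_L(B,f)=0$. Summing the surviving contributions then yields $\com_L(f)=2^{1-k}+2\Lambda_L(f)$, as required.

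I expect essentially no obstacle: the corollary is a direct specialisation of the mechanism behind Theorem~\ref{t:main-uncommon} to the case $c(L)=k$, where $\Cc(L)=\{[k]\}$ and $L_{[k]}=L$. The only point needing a moment's care is the parity bookkeeping — verifying that no even set strictly between $\emptyset$ and $[k]$ has size at least $s(L)$ — which is handled by the observation that $k$ is even (so that, in the language of the proof of Theorem~\ref{t:main-uncommon}, both the sets $B$ with $0<|B|<c(L)$ and the sets $B$ with $|B|=c(L)$, $B\notin\Cc(L)$, drop out). Everything else reduces to the balanced-average computation already carried out in the proof of Claim~\ref{cl:phi}.
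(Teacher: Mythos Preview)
Your proposal is correct and matches the paper's approach: the paper states that the corollary follows immediately from Claim~\ref{cl:phi} together with an equation similar to~\eqref{eq:split}, which is exactly what you carry out by expanding via~\eqref{eq:thomasonst}, noting $c(L)=k$ is even so that every even $B$ with $0<|B|<k$ has $|B|\le k-2<s(L)$ and hence is not rank-reducing, and then applying Claim~\ref{cl:phi}\ref{it:generic}. Your parity bookkeeping is precisely the point the paper leaves implicit.
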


\subsection{Proof of Theorem~\ref{t:main-evens}}\label{subsec:even}

We now give a simple application of Theorem~\ref{t:main-uncommon} to prove Theorem~\ref{t:main-evens}, which says that a system $L$ with $s(L)$ even is uncommon whenever every critical subsystem is. Recall from Lemma~\ref{lem:useful} that when $s(L)$ is even, the critical subsystems are $(1 \times s(L))$-systems.

We require the following lemma about single equation systems. This lemma immediately follows from the arguments given to prove Theorem 1.4 (b) in \cite{fpz19}, but is not stated explicitly there. For the rest of this section (and nowhere else) we use $\E$ to denote the expectation with respect to the probability function given by the statement in \thref{l:random-fcs}.

\begin{lem}
	\thlabel{l:random-fcs}
	Let $q$ be a prime power. Then there exists $\eps>0$ and a probability function $\mathbb{P}$ on the set ${\mathcal F}$ of balanced functions $f: \F_q \to \fint$ such that the following properties hold. 
	\begin{enumerate}
		\item\label{it:evenex} For every uncommon equation $L$ with even support, $\errm{} {\Lambda_L(f)} = 0.$ 
		\item\label{it:posden} There exists a set $\mathcal{F}^+ \subseteq \mathcal{F}$ of positive probability, such that for every $f \in \mathcal{F}^+$ we have $\Lambda_L(f) > \eps$ for every uncommon equation $L$ with even support. 
	\end{enumerate}
\end{lem}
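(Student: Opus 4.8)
# Proof Proposal for Lemma \ref{l:random-fcs}

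\textbf{Overview of the approach.} The plan is to reverse-engineer the probability measure $\mathbb{P}$ from the Fourier-analytic construction of Fox, Pham and Zhao. Recall from \eqref{eq:single-eq} that for a single equation $L: a_1 x_1 + \dots + a_k x_k = 0$ with all $a_i \neq 0$, we have $\Lambda_L(f) = \sum_{r} \widehat{f}(a_1 r) \cdots \widehat{f}(a_k r)$. The key insight of \cite{fpz19} is that when $L$ is uncommon with even support (i.e. its coefficients cannot be paired into pairs summing to zero), one can choose the Fourier coefficients of $f$ on a short arithmetic-progression-like set so that $\Lambda_L(f) < 0$, while the ``diagonal'' term $\widehat{f}(0) = 0$ because $f$ is balanced. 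My plan is to take $f$ to be a random $\F_q$-translate (equivalently, a random modulation by a character) of one such fixed witness function, and to exploit the fact that modulating $f$ by a character $\chi_s$ shifts all its Fourier coefficients by $s$, which multiplies the $r$-th summand of $\Lambda_L(f)$ by $\chi_s$ evaluated at $(a_1 + \dots + a_k) \cdot (\text{shift})$ — and crucially, for an uncommon equation one arranges the support of $\widehat f$ so that averaging over $s$ kills the expectation but individual choices of $s$ retain a definite sign.

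\textbf{Key steps.} First I would fix, for each uncommon equation $L$ with even support over $\F_q$ (there are finitely many up to scaling, since $k$ and $q$ are bounded once $q$ is fixed — actually $k$ ranges over $\N$, so I need a construction uniform in $k$, which is exactly what \cite{fpz19} provides), a balanced witness function $f_L : \F_q \to \fint$ with $\Lambda_L(f_L) < 0$; this is precisely the content of Theorem 1.4(b) of \cite{fpz19}. Since $q$ is fixed and there are only finitely many values the quantity $\Lambda_L(f_L)$ can take as $L$ varies (the function $f_L$ in \cite{fpz19} depends only on the ``type'' of $L$, of which there are boundedly many for each support size, and the construction is explicitly finite), one extracts a single $\eps > 0$ working for all of them. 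Second, I would let $\mathbb{P}$ be a carefully chosen finite mixture: with appropriate probabilities, output one of the finitely many functions of the form $x \mapsto f_L(x)$ or a ``balancing'' partner so that the \emph{mean} over $\mathbb{P}$ of $\Lambda_L(\cdot)$ vanishes for \emph{every} such $L$ simultaneously. The cleanest way to achieve property \ref{it:evenex} is to symmetrize: if $f$ lies in the support of $\mathbb{P}$, so does $-f$, and for even support $\Lambda_L(-f) = \Lambda_L(f)$ — that does \emph{not} help. Instead, the right symmetrization uses dilations: replacing $f(x)$ by $f(\lambda x)$ for $\lambda \in \F_q^\times$ turns $\Lambda_L(f)$ into $\Lambda_{L}(f)$ as well for a single equation (since the solution set is dilation-invariant). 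So property \ref{it:evenex} cannot come from a symmetry of a single good function; rather, $\mathcal{F}$ must contain functions of \emph{both} signs of $\Lambda_L$ for each $L$, balanced so the mean is exactly $0$, and $\mathcal{F}^+$ is the part of the support realizing the negative-of-negative, i.e. $\Lambda_L(f) > \eps > 0$.

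\textbf{Reconciling with the stated signs.} Looking again at the statement: \ref{it:posden} asks for $\Lambda_L(f) > \eps$ on a positive-probability set, and \ref{it:evenex} asks $\mathbb{E}[\Lambda_L(f)] = 0$; together these force the support to also contain functions with $\Lambda_L(f) < 0$. So the construction must be: take the \cite{fpz19} witness $f_L^-$ with $\Lambda_L(f_L^-) < -\eps$ and also produce a companion $f_L^+$ with $\Lambda_L(f_L^+) > \eps$ — e.g. by a parity/phase trick on the Fourier side, multiplying selected Fourier coefficients by $-1$ to flip the sign of the odd-degree monomial $\prod \widehat f (a_i r)$ when $k$ is... no: $k$ is even, so global sign flips don't work. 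The honest route is: \cite{fpz19} in fact constructs, for each uncommon even equation, a \emph{one-parameter family} $f_t$ with $\Lambda_L(f_t)$ changing sign as $t$ varies over a phase; integrate over this parameter to get mean zero and use an open arc where it exceeds $\eps$. I would therefore go back to the explicit construction in \cite{fpz19}: they use an ansatz $\widehat{f}(r) = c_r$ supported on $O(1)$ frequencies forming a generalized progression, and the sign of $\Lambda_L(f)$ is governed by a low-degree polynomial / character sum in a phase parameter $\theta$; averaging $\theta$ uniformly over its period gives expectation $0$ by orthogonality of characters (this is where balancedness $\widehat f(0)=0$ enters, ensuring no constant term), and a subinterval of $\theta$ gives $\Lambda_L(f) > \eps$.

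\textbf{Main obstacle.} The principal difficulty is \textbf{uniformity in both $L$ and the support size $k$}: I need a \emph{single} measure $\mathbb{P}$ and a \emph{single} $\eps$ that work for the infinite family of all uncommon even-support equations over $\F_q$. This requires that the \cite{fpz19} construction be genuinely uniform — that the witnessing function depends only on a bounded amount of data about $L$ (over $\F_q$, an uncommon even equation is detected by the failure of the pairing condition, and one can reduce to a bounded ``core''), and that the sign-changing phase parameter and the gap $\eps$ can be chosen independently of $k$. Extracting this uniform statement from the proof of Theorem 1.4(b) in \cite{fpz19} — which is stated for a fixed equation — is the real work; the construction of $\mathbb{P}$ as the pushforward of Lebesgue measure on the phase torus under the map $\theta \mapsto f_\theta$ (then discretized/quotiented to land in the balanced functions $\F_q \to \fint$, possibly by a finite approximation argument since we only need positive probability, not a specific density) is then routine. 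A secondary technical point is ensuring the constructed $f$ genuinely maps into $\fint$ rather than a slightly larger interval — handled by rescaling, which only shrinks $\eps$ by a bounded factor and preserves both properties since $\Lambda_L$ is homogeneous of degree $k$ in $f$ (rescale $f \mapsto \delta f$, then $\Lambda_L \mapsto \delta^k \Lambda_L$, still zero mean, still bounded below by $\delta^k \eps$ on $\mathcal{F}^+$).
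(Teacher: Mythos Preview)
The paper does not actually prove this lemma; it records that the statement ``immediately follows from the arguments given to prove Theorem~1.4(b) in \cite{fpz19}''. That construction is more direct than anything in your proposal: set $\widehat f(0)=0$ and $\widehat f(r)=c\,\xi_r$ for $r\in\F_q^\times$, where the $\xi_r$ are \emph{independent} uniform random variables on the unit circle subject to the reality constraint $\xi_{-r}=\overline{\xi_r}$, and $c>0$ is small enough that $|f|\le 1/2$. The measure $\mathbb P$ is simply the law of this random $f$. By \eqref{eq:single-eq} one has $\Lambda_L(f)=c^k\sum_{r\ne 0}\xi_{a_1r}\cdots\xi_{a_kr}$, and independence gives that $\mathbb E[\xi_{a_1r}\cdots\xi_{a_kr}]$ counts perfect matchings of $[k]$ into pairs $(i,j)$ with $a_i+a_j=0$; this is zero exactly when $L$ is uncommon, which is (i). For (ii) one takes $\mathcal F^+$ to be a neighbourhood of the deterministic point $\xi_r\equiv 1$, where $\Lambda_L(f)=(q-1)c^k>0$ for every even-length $L$.

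Your proposal circles this idea but does not land on it. The symmetrisations via $f\mapsto -f$, dilation, and character modulation are all dead ends (as you yourself note), and your eventual description of the \cite{fpz19} ansatz --- Fourier support on ``$O(1)$ frequencies forming a generalised progression'' governed by a single ``phase parameter $\theta$'' --- is not what they do: the support is all of $\F_q^\times$, and the phases are \emph{independent}. Independence is exactly what makes the expectation factor and vanish for every uncommon $L$ simultaneously; a single phase parameter cannot achieve (i) uniformly over all such $L$. Your closing worry about uniformity in $k$ is legitimate --- since $(q-1)c^k\to 0$, no fixed $\eps$ can work for equations of all lengths --- but this is an imprecision in the lemma's statement rather than an obstruction to its use: in the proof of \thref{t:main-evens} every critical equation $L_B$ has the same length $s(L)$, and only the qualitative fact that $\sum_B\Lambda_{L_B}(f)>0$ with positive probability is invoked.
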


The two statements \ref{it:evenex} and \ref{it:posden} imply \cite[Theorem 1.4 (b)]{fpz19}, namely that, for a given equation $L$ of even length in which the coefficients of $L$ cannot be partitioned into pairs, each summing to zero, there must be a function $f^*:\F_q \to \fint$ satisfying $\Lambda_L(f^*) <0.$ That is, $L$ is uncommon. We stress that, a priori, the function $f^*$ may be different for any given $L$ (even though the function $f$ may be taken to be the same for any collection of even-length equations $L$). 

Now we are ready to prove Theorem~\ref{t:main-evens}.
\begin{proof}[Proof of Theorem~\ref{t:main-evens}.]
	Let $f \in \mathcal{F}$ be chosen randomly according to $\mathbb{P}$. As $s(L)$ is even, by Lemma~\ref{lem:useful}, for each $B \in \Cc(L)$, $L_B$ is an uncommon equation with even support. So by Lemma~\ref{l:random-fcs}~\ref{it:evenex}, we have $\erm{}{\Lambda_{L_B}(f)} = 0$. Thus by linearity of expectation,        \begin{equation}\label{eq:exp0}
		\erm{}{\sum_{B \in \Cc(L)} \Lambda_{L_B}(f)} = 0.
	\end{equation}
	By Lemma~\ref{l:random-fcs}~\ref{it:posden}, there exists a set of positive probability $\mathcal{F}^+ \subseteq \mathcal{F}$ such that for each $f \in \mathcal{F}^+$ we have $ \sum_{B \in \Cc(L)} \Lambda_{L_B}(f) > 0$. As $\mathcal{F}^+$ has positive probability, using \eqref{eq:exp0} we see that with positive probability,
	\begin{equation} \label{eq:fneg}
		\sum_{B \in \Cc(L)} \Lambda_{L_B}(f) < 0.
	\end{equation}	
	 Therefore a function $f^*$ satisfying \eqref{eq:fneg} exists and applying Theorem~\ref{t:main-uncommon} with $f^*$ implies that the system $L$ is uncommon, as required.
\end{proof}

Observing that all irredundant equations with two variables are uncommon gives the following immediate corollary. We will use this in the proof of \thref{t:4APs}.
\begin{cor} \thlabel{c:twos}
	Any irredundant system $L$ with $s(L)=2$ is uncommon.
\end{cor}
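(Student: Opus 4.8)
The statement to prove is \thref{c:twos}: any irredundant system $L$ with $s(L)=2$ is uncommon. Here is my plan.

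\medskip

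\textbf{Approach.} The strategy is to apply \thref{t:main-evens} (equivalently, to feed a suitable function into \thref{t:main-uncommon}). Since $s(L)=2$ is even, every critical set $B\in\Cc(L)$ has $|B|=c(L)=s(L)=2$, and by \thref{lem:useful}(ii) each critical system $L_B$ is a single equation in exactly two variables, say $a x_{i_1} + b x_{i_2}=0$ with $a,b\neq 0$. The key observation is that such a two-variable equation is \emph{irredundant} — it does not induce $x_{i_1}-x_{i_2}=0$ — precisely because $L$ is irredundant, and so $a+b\neq 0$ (otherwise $L_B$ would be equivalent to $x_{i_1}-x_{i_2}=0$, making $B$ witness redundancy of $L$). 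Hence every critical system of $L$ is a length-two irredundant equation, and all such equations are uncommon (this is the content of the parenthetical remark in the paper preceding the corollary; it is the $k=2$, even-support base case of the Fox--Pham--Zhao characterisation, and also follows directly since a single equation $ax+bx'=0$ with $a+b\neq 0$ cannot have its coefficients partitioned into zero-summing pairs).

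\medskip

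\textbf{Key steps, in order.} First, record that $s(L)=2$ even forces $c(L)=2$, so critical sets have size two and, by \thref{lem:useful}(ii), $m_B=1$: each $L_B$ is a two-variable equation $ax_{i_1}+bx_{i_2}=0$. Second, argue that irredundancy of $L$ passes to each $L_B$: if some $L_B$ induced $x_{i_1}-x_{i_2}=0$ then so would $L$ (compose the implications in the definition of "induces on $B$"), contradicting irredundancy; equivalently $a+b\neq 0$. Third, invoke that every irredundant two-variable equation has even (namely, full) support of size two and is uncommon — the condition from the single-equation characterisation (coefficients partition into zero-summing pairs) fails because a single pair $\{a,b\}$ with $a+b\neq 0$ does not sum to zero. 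Fourth, apply \thref{t:main-evens}: every equation of length $s(L)=2$ induced by $L$ is an $L_B$ for some critical $B$ (any induced length-$s(L)$ equation sits on a set of size $s(L)=c(L)$, which is by definition a critical set), each is uncommon, $s(L)$ is even, and $2\le m<k$, so $L$ is uncommon.

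\medskip

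\textbf{Main obstacle.} The proof is essentially immediate once the machinery is in place, so there is no serious obstacle; the only point requiring care is the bookkeeping in the last step — checking that "every equation of length $s(L)$ induced by $L$" coincides with "the critical systems $L_B$, $B\in\Cc(L)$". This is exactly the definition of critical set (a size-$c(L)=s(L)$ subset on which $L$ induces an equation), so it is a matter of unwinding definitions rather than a real difficulty. One should also make sure the two-variable case of uncommonness is cited or justified rather than assumed silently; depending on what the paper wants to cite, one either references \cite{fpz19} or gives the one-line argument that $ax+bx'=0$ with $a+b\neq0$ has a balanced function with negative solution density (e.g.\ via \eqref{eq:single-eq}, choosing a real function $f$ on $\F_q$ with a single nonzero Fourier coefficient $\widehat f(r)$ at some $r$ with $a r \neq b r$, forcing $\Lambda_L(f)=\widehat f(ar)\widehat f(br)$ to vanish, then perturbing — but for the purposes of this corollary the cited characterisation suffices).
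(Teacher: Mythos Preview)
Your proposal is correct and takes essentially the same approach as the paper: the paper's entire argument is the one-line observation that every irredundant two-variable equation is uncommon, after which \thref{c:twos} is declared an immediate corollary of \thref{t:main-evens}. You have simply unpacked the implicit steps (that $c(L)=s(L)=2$, that each $L_B$ is a single length-two equation by \thref{lem:useful}(ii), and that irredundancy of $L$ forces $a+b\neq 0$ in each $L_B$), which is exactly what the paper intends but does not spell out.
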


\section{Uncommon 4-variable systems}\label{sec:unc-4-var}
   Recall that for a system $L$, we write $s(L)$ to denote the minimal length of an equation induced by $L$. In this section we will prove that any $(2 \times 4)$-system with $s(L) = 3$ is uncommon. We do this by exhibiting a function with the properties required to apply Theorem~\ref{t:main-uncommon}.

    \begin{lem}\thlabel{l:psi-neg} 
   	Let $q$ be a prime power. Then there exists $\nu > 0$, $d \ge 1$ and a balanced function  $\psi: \F_q^d \rightarrow [-\frac{1}{4},\frac{1}{4}]$ such that $\Lambda_{L}(\psi)< -\nu $ for every $(2\times 4)$-system $L$ over $\F_q$ with $s(L)=3$. 
   	%                    \item $$\Lambda_{L(4)}(\psi)< -  d^{-2} \alpha^4 \left(\frac {m_4}{q^2} \right)^d = -4(q-3)^{-4} \alpha^4 \left(\frac {q^2-4q+3}{q^2} \right)^d .$$
   	%                   In particular, $\Lambda_{L(4)}(\psi)$ does not depend on the system itself. 
   	%                   
   \end{lem}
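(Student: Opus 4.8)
The plan is to reduce the problem to a finite case analysis over $\F_q$ itself (so $d=1$ suffices in spirit, though I will keep $d$ flexible) and then build the balanced function $\psi$ out of a Fourier-analytic construction. First I would classify the $(2\times 4)$-systems $L$ with $s(L)=3$ up to the equivalence notion from Section~\ref{sec:prelims}. Since $s(L)=3$, the system induces at least one equation of length three, say (after permuting variables and scaling) $a x_1 + b x_2 + c x_3 = 0$ with $a,b,c \in \F_q^\times$, and the $(2\times 4)$-system is spanned by this together with one more equation involving $x_4$ nontrivially. Because every $(2\times 4)$-system of this shape is irredundant (it has $s(L)=3>2$), \thref{cor:simple} does not directly apply, but the structural constraint is strong: up to equivalence $L$ is determined by the length-$3$ equation on $\{1,2,3\}$ and the fact that $x_4$ is a linear combination of $x_1,x_2,x_3$. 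I would use \eqref{eq:single-eq} and its obvious generalisation to write $\Lambda_L(\psi)$ as a sum over Fourier coefficients: parametrising $\sol(L;\F_q^d)$ by two free characters $r,s$, we get $\Lambda_L(\psi) = \sum_{r,s} \widehat{\psi}(\lambda_1 r + \mu_1 s)\cdots \widehat{\psi}(\lambda_4 r + \mu_4 s)$ for fixed nonzero coefficient vectors $(\lambda_i,\mu_i)$ depending on $L$.

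Next I would exploit the fact that there are only finitely many equivalence classes of such $L$ over a fixed $\F_q$ (since coefficients live in the finite set $\F_q^\times$ and permutations/scalings of rows and columns form a finite group). So it suffices to find a \emph{single} balanced $\psi:\F_q^d\to[-1/4,1/4]$ with $\Lambda_L(\psi)<0$ simultaneously for all of them, and then take $\nu = \tfrac12 \min_L |\Lambda_L(\psi)| > 0$. To find such a $\psi$, I would first find, for \emph{each} individual $L$, some balanced $g_L$ with $\Lambda_L(g_L)<0$; the cleanest way to do this is via the length-$3$ induced equation together with the ``projection'' structure. Concretely, since $L$ induces $a x_1 + b x_2 + c x_3 = 0$, one can consider functions that are constant on the fibres of the $x_4$ coordinate or, conversely, one can play the three-variable equation (which is an odd equation, hence common by \cite{ccs07}, so we cannot get negativity from it alone) against the constraint coupling $x_4$. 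The key realisation is that the fourth variable $x_4 = \alpha x_1 + \beta x_2 + \gamma x_3$ may have \emph{zero} coefficient on one of the $x_i$, or may be a genuine length-$4$ relation; in either subcase the product $f(x_1)f(x_2)f(x_3)f(x_4)$ summed over $\sol(L)$ behaves like $\Lambda$ of an even-length pattern, and even-length patterns that are not ``paired'' are uncommon by \thref{l:random-fcs}. I would lean on \thref{l:random-fcs}: there is a probability measure $\mathbb P$ on balanced $f:\F_q\to[-1/2,1/2]$ such that $\E \Lambda_{L'}(f)=0$ for every uncommon even equation $L'$ and $\Lambda_{L'}(f)>\eps$ on a positive-probability set. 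Running the same averaging argument as in the proof of Theorem~\ref{t:main-evens} — expectation zero, positive on a positive-measure set, hence negative on a positive-measure set — should produce, for the relevant even ``pattern'' associated with $L$, a function making $\Lambda_L$ negative; rescaling into $[-1/4,1/4]$ is harmless.

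The main obstacle, and where the real work lies, is handling the case where $x_4$ genuinely involves all of $x_1,x_2,x_3$ so that $\Lambda_L(\psi)$ is a ``$2$-dimensional'' Fourier sum $\sum_{r,s}\widehat\psi(\ell_1)\cdots\widehat\psi(\ell_4)$ with four linear forms $\ell_i = \lambda_i r + \mu_i s$ in two variables, no two of them proportional (this is exactly the obstruction that makes $4$-APs uncommon and is the crux of the paper). Here the one-dimensional tricks from \cite{fpz19} do not immediately transfer, because the diagonal terms and the structure of which forms coincide are more delicate. I expect one must choose $\psi$ with Fourier support concentrated on a carefully chosen small set $S\subseteq \widehat{\F_q^d}$ — for instance a set such that the only solutions to $\ell_1=\ell_2=\ell_3=\ell_4\in S$ (or the relevant partial coincidences) are ``degenerate'', forcing the sum to be dominated by a single negative main term. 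This is essentially a discrete analogue of the Gowers/Wolf construction of a pseudorandom density-$1/2$ set, and I would try to package it so that the \emph{same} $S$ works for all finitely many $L$: since each $L$ imposes only finitely many linear conditions on pairs/triples of forms, a generic or suitably structured $S$ avoids all the bad coincidences at once. Once $\psi$ is fixed and $\Lambda_L(\psi)<0$ is verified for every $L$ in the finite list, setting $\nu$ to be half the minimum over the list completes the proof. I would present the construction of $\psi$ first, then the Fourier expansion of $\Lambda_L(\psi)$, then the term-by-term estimate showing negativity, and finally the finiteness-of-cases remark that upgrades ``for each $L$'' to ``uniformly in $L$''.
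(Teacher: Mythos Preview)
Your proposal has a genuine gap at its core. You correctly identify that the heart of the matter is the case where the four linear forms $\ell_1,\dots,\ell_4$ are pairwise non-proportional (which is exactly the condition $s(L)=3$), but you never actually construct a function $\psi$ that makes $\Lambda_L(\psi)<0$ in that case. The two mechanisms you invoke do not cover it:

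\begin{itemize}
\item Your appeal to \thref{l:random-fcs} is misplaced: that lemma concerns \emph{single equations} of even length, whereas here $L$ is a genuine rank-two system. The assertion that ``the product $f(x_1)\cdots f(x_4)$ summed over $\sol(L)$ behaves like $\Lambda$ of an even-length pattern'' is not substantiated, and in fact the two-parameter Fourier sum $\sum_{r,s}\widehat\psi(\ell_1)\cdots\widehat\psi(\ell_4)$ has no reason to share the sign behaviour of a one-parameter sum. There is no ``even pattern associated with $L$'' to which \thref{l:random-fcs} applies.
\item For the hard case you then fall back on ``a discrete analogue of the Gowers/Wolf construction'' with Fourier support on a set $S$ avoiding coincidences, but you give no candidate for $S$ or $\psi$, no main-term/error-term decomposition, and no argument forcing a negative sign. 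Saying that a generic $S$ should work is not a proof; the whole difficulty of $4$-AP-type systems is precisely that the diagonal contributions in such sums are non-negative and must be beaten by something explicit.
\end{itemize}

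By contrast, the paper's proof is entirely different in spirit and does not use Fourier analysis at all for this lemma. It writes down an explicit $\psi:\F_q^d\to\{0,\alpha,\beta\}$ (with $d=(q-3)^2$, $\alpha=1/4$, $\beta=-\alpha(q-1)/d$) depending only on how many coordinates of $y$ are zero, and then computes $\Lambda_L(\psi)$ by elementary counting. The only property of $L$ used is that $s(L)=3$, which forces any solution in $\F_q$ with two zero entries to be the all-zero vector; this yields exact closed-form counts for solutions with $0,1,2,3,4$ zero entries, uniformly in $L$. A direct calculation then shows $\Lambda_L(\psi)<-\nu$ for all such $L$ when $q\ge 40$, and small $q$ is handled by passing to $\F_{q^6}$ via a linear isomorphism. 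The uniformity in $L$ thus comes for free from the construction, rather than from a finiteness argument after the fact.
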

	\thref{l:psi-neg} and Corollary~\ref{cor:simple} together imply the following.
	\begin{cor}\thlabel{c:twoByfour}
		Every $(2 \times 4)$-system $L$ with $s(L)=3$ is uncommon. 
	\end{cor}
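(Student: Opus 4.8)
The plan is to obtain \thref{c:twoByfour} as a short deduction from \thref{l:psi-neg} and Corollary~\ref{cor:simple}, using the functional characterisation of commonness in \thref{def:fctdef}. The substantive work all lives in \thref{l:psi-neg}; what remains is to check that the hypotheses of Corollary~\ref{cor:simple} and of \thref{def:fctdef} are met by a $(2\times 4)$-system $L$ with $s(L)=3$, and then to assemble the inequality.

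First I would record two elementary structural facts. Since $s(L)=3$ is odd, we have $c(L)=s(L)+1=4=k$, so the full index set $[4]$ is critical for $L$, and $L$ is induced by itself on $[4]$. In other words, $L$ is a \emph{critical} $(2\times 4)$-system with $L_{[4]}=L$, which is precisely the setting of Corollary~\ref{cor:simple} (and means that here we need only Corollary~\ref{cor:simple}, not the full strength of Theorem~\ref{t:main-uncommon}). Second, $L$ is irredundant: a redundant system induces some equation $x_i-x_j=0$ of length $2<3=s(L)$, contradicting the definition of $s(L)$. Irredundancy is what allows us to invoke \thref{def:fctdef}.

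Next I would feed the function from \thref{l:psi-neg} into Corollary~\ref{cor:simple}. Let $\nu>0$, $d\ge 1$ and $\psi:\F_q^d\to[-\frac14,\frac14]$ be as in \thref{l:psi-neg}, so that $\psi$ is balanced, takes values in $\fint$, and $\Lambda_L(\psi)<-\nu$ for every $(2\times 4)$-system $L$ over $\F_q$ with $s(L)=3$. Applying Corollary~\ref{cor:simple} to the critical system $L$ with $k=4$, $n=d$ and $f=\psi$ yields
$$\com_L(\psi) = 2^{1-4} + 2\Lambda_L(\psi) < 2^{-3} - 2\nu < 2^{-3} = 2^{1-k}.$$
Since $L$ is irredundant and $\psi$ maps into $[-\frac14,\frac14]\subseteq[-\frac12,\frac12]$, the function $\psi$ is an admissible test function in \thref{def:fctdef}, and the displayed strict inequality shows that the commonness criterion there fails. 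Hence $L$ is uncommon.

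As for the main obstacle: at the level of this corollary there genuinely is none — the only points requiring any care are (a) that the parity $s(L)=3$ forces $c(L)=k$, so $L$ is its own critical subsystem and Corollary~\ref{cor:simple} applies directly, and (b) that $\psi$ takes values in $[-\frac14,\frac14]$, hence in $\fint$, so it is a legitimate witness in \thref{def:fctdef}; both are immediate. The real difficulty — producing a single balanced $\psi$ with $\Lambda_L(\psi)<0$ \emph{simultaneously} for all $(2\times4)$-systems of minimal equation-length three — is exactly the content of \thref{l:psi-neg} and is discharged there.
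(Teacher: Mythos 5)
Your proof is correct and is exactly the deduction the paper intends (the paper simply states that \thref{l:psi-neg} and Corollary~\ref{cor:simple} together imply the result); your verifications that $c(L)=4=k$ so $L$ is its own critical system, that $s(L)=3$ forces irredundancy, and that $\psi$ is an admissible balanced test function are the right details to fill in.
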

   
   \begin{proof}[Proof of \thref{l:psi-neg}]
   	
	Let $q \ge 40$ be a prime power. We first exhibit an appropriate function $\psi$ for such a $q$. We then use this to find a function that works for any prime power $q < 40$. 

	Let $d = (q-3)^2$, set $\alpha := 1/4$ and $\beta = \beta(q,d) := -\alpha (q-1)/d$. 
   	Now	define $\psi = \psi(q): \F_q^d \to [-1/4, 1/4]$ by 
   		\begin{equation}\label{eq:psi}			
   		\psi(y) = \begin{cases}
   			0 & \text{if } y_i = y_j = 0 \text{ for some } i \not= j \in [d],\\
   			\alpha & \text{if } y_i \not= 0 \text{ for all } i \in [d],\\
   			\beta & \text{ otherwise.}
   		\end{cases}
   		\end{equation}
	We remark that we may choose $\alpha\neq 0$ arbitrarily for this proof to work, as long as $|\psi|\le 1/4,$ which we use in the proof of Theorem~\ref{t:4APs}. 
   	By definition, we have
   	\begin{equation} \label{eq:f-zero-sum}
   		\sum_{y \in \F_q^d} \psi(y) = (q-1)^d \alpha + d(q-1)^{d-1}\beta  = 0,
   	\end{equation}
	so $\psi$ is balanced. 
   	
   	We must now check that there exists some $\nu>0$ such that $\Lambda_L(\psi) < -\nu$, for any $(2\times 4)$-system $L$ over $\F_q$ with $s(L)=3.$ 
   	Consider 
   	\begin{align*}
   		\Lambda_L(\psi) &= \frac{1}{|\solA{L}{\F_q^d}|} \sum_{\xv\in\solA{L}{\F_q^d}} \psi(\xv),  
   	\end{align*}
   	where we use $\psi(\xv)$ to denote the product $\psi(x_1)\psi(x_2)\psi(x_3)\psi(x_4)$ for $\xv=(x_1,x_2,x_3,x_4) \in (\F_q^d)^4.$ 
  An element $\xv=(x_1,x_2,x_3,x_4)$ in $\solA{L}{\F_q^d}$ corresponds naturally to a $(d\times 4)$ matrix $M(\xv)$ whose columns are the elements $x_1,x_2,x_3$ and $x_4$. Recall from the beginning of Section~\ref{sec:prelims} that if $\xv$ is a solution to $L$ over $\F_q^d,$ each row of $M(\xv)$ corresponds to a solution to~$L$ in~$\F_q$. 
   	By definition of $\psi$, if some column of $M(\xv)$ contains two zeroes, then $\psi(\xv)= 0$. 
   	As $|\solA{L}{\F_q^d}| = q^{2d},$ it follows that 
   	\begin{align}\label{eq:goal}
   		\Lambda_L(\psi) = q^{-2d} \sum_{\ell=0}^4\sum_{\xv\in Z_\ell} \psi(\xv), 
   	\end{align}
   	where, for $0\le \ell \le 4$, $Z_\ell$ denotes the set of $\xv\in\solA{L}{\F_q^d}$ such that the matrix $M(\xv)$ contains exactly $\ell$ zeros.

	As $L$ is a $(2 \times 4)$-system with $s(L) = 3$, it has the property that for any $a,b \in \F_q$ and $i,j \in [4]$ with $i\neq j,$  there is a unique solution $\xv\in \F_q^4$ to $L$ such that $x_i = a$ and $x_j = b$. 
	It follows that the only solution to $L$ over $\F_q$ with at least two zeroes is $\xv = 0$, and the number of solutions to $L$ in $\F_q$ with exactly one zero is $4(q-1)$ (since a solution can be specified by fixing one zero and one non-zero variable). Hence, the number of solutions with no zeros is $q^2-4q+3 = (q-1)(q-3)$, which we denote by $m_4$. So, for any $\xv \in\bigcup_{\ell=0}^4 Z_\ell$, a non-zero row of $M(\xv)$ contains at most one zero.

   	Define $Z_4^*:= \{\xv \in Z_4: M(\xv) \text{ has a zero row}\}$. Then by definition of $\psi$,
   	\begin{align}\label{eq:ZStar}
   		\sum_{\xv\in Z_4^*}  \psi(\xv) = \sum_{\xv\in Z_4^*} \beta^4 = d m_4^{d-1} \beta^4,
   	\end{align}
 	as there are $d$ choices for the (unique) all-zero row of $M(\xv)$, and $m_4$ choices for the $(d-1)$ remaining rows. 
    For  $0\le \ell \le 4$ and $\xv\in  Z_\ell\sm Z_4^*$ we obtain that either $\psi(\xv) = 0$ (if $M(\xv)$ contains two or more zeros in the same column) or that  
   	$\psi(\xv)= \alpha^{4-\ell}\beta^{\ell}$ (if $M(\xv)$ has $\ell$ zeros in distinct columns). As $\xv\not\in Z_4^*$, the $\ell$ zeros also appear in distinct rows. 
   	There are $\binom 4\ell (d)_{\ell}$ ways to choose the position of the zeros, where $(d)_{\ell}$ denotes the falling factorial. Given the position of the zeros, the number of solutions is $(q-1)^\ell m_4^{d-\ell}$ (by choosing rows of $M(\xv)$ one by one). 
   	We obtain that 
   	\begin{align}\label{eq:Zell}
   		\sum_{\xv\in Z_\ell\sm  Z_4^*} \psi(\xv) = \binom 4\ell (d)_{\ell} (q-1)^{\ell}m_4^{d-\ell}\alpha^{4-\ell}\beta^{\ell}.
   	\end{align}
   	As $Z_0,\ldots, Z_3,Z_4\sm Z_4^*$ and $Z_4^*$ are pairwise disjoint,~\eqref{eq:goal}, \eqref{eq:ZStar} and~\eqref{eq:Zell} imply that 
   	\begin{align}\label{eq:aux003}
   		\Lambda_L(\psi) &= q^{-2d} \left(d m_4^{d-1} \beta^4 
   		+ \sum_{\ell=0}^4 \binom 4\ell (d)_{\ell} (q-1)^{\ell}m_4^{d-\ell}\alpha^{4-\ell}\beta^{\ell}\right)\nonumber\\
   		&= \left(\frac{m_4}{q^{2}}\right)^d \alpha^4 
   		\left( 	\frac{(q-1)^3}{(q-3)d^3} + \sum_{\ell=0}^4 \binom 4\ell (-1)^{4-\ell} \frac{(d)_{\ell}}{d^{\ell}}\left(\frac{q-1}{q-3}\right)^{\ell}
   		\right),
   	\end{align}
   	where we use that $\beta/\alpha = -(q-1)/d$ and that $m_4 = (q-1)(q-3)$. Substituting $d = (q-3)^2$ and rearranging gives that for some $R$ with $|R|\leq 100/ (q-3)^5$, we have
   	\begin{align*}
   		\Lambda_L(\psi) &= \left(\frac{m_4}{q^{2}}\right)^d \alpha^4 
   		\left(	\frac{(q-1)^3}{(q-3)^7} +  \frac{16}{(q-3)^4} - \frac{21}{(q-3)^4}
   		\left(1+\frac{2}{q-3}\right)^2
   		+R
   		\right)\\
   		&\le \left(\frac{m_4}{q^{2}}\right)^d \left(\frac{\alpha}{q-3}\right)^4 
   		\left(	\left(\frac{q-1}{q-3}\right)^3  - 5 + \frac{100}{q-3}	\right)\\
   		&\le - \left(\frac{m_4}{q^{2}}\right)^d \left(\frac{\alpha}{q-3}\right)^4, 
   	\end{align*}
   	where the last inequality holds as $q\ge 40.$ Choosing $\nu = \left((q-1)(q-3)/q^{2}\right)^d \left(\alpha/(q-3\right))^4$ gives a function $\psi$ with the required properties whenever $q\ge 40$.

   	Assume now that $q\le 40$ is a prime power. Then $q^6 > 40$. We have just proved that there exists an integer $d$, some $\nu > 0$ and a balanced function $\psi:\F_{q^6}^{d}\to[-1/4,1/4]$ such that   
   	$\Lambda_{L}(\psi)< -\nu$ for any $(2\times 4)$-system $L$ over $\F_q$ with $s(L)=3.$ 
   	Considering $\F_{q^6}$ as a vector space over $\F_q$, there is a linear isomorphism $h: \F_q^{6d}\to \F_{q^6}^d.$  
   	Now, $\xv=(x_1,\ldots,x_4)\in \F_q^{6d}$ is a solution to $L=0$ in $\F_q^{6d}$ if and only if $(h(x_1),\ldots,h(x_4))$ is a solution to $L=0$ in $\F_{q^6}^d,$ since $L$ is a system of linear forms with coefficients in $\F_q$ and since $h$ is a linear isomorphism. It follows that $\psi\circ h:\F_q^{6d}\to[-1/4,1/4]$ is a function with the required properties.
   	\end{proof}

\section{Proof of Theorem~\ref{t:4APs}}\label{sec:ProofOfMain1} 

In this section we draw together results from Sections~\ref{sec:main-thm} and~\ref{sec:unc-4-var} along with two new lemmas to prove Theorem~\ref{t:4APs}.

\subsection{Overview of proof}

We begin by motivating our two key lemmas, from which the proof will easily follow. Let $q$ be a prime power, let $m < k$ and let $L$ be an  $(m \times k)$-system over $\F_q$. In order to prove \thref{t:4APs}, we wish to apply Theorem~\ref{t:main-uncommon}. In order to do this, for our system $L$ we must construct a balanced function $\psi^*$ that has the property
\begin{equation}\label{eq:rough}
	\sum_{B \in \mathcal{C}(L)} \Lambda_{L_B}(\psi^*) < 0.
\end{equation}	
	 Given \thref{c:twos}, the only case needing work is that where $s(L) = 3$. Using Lemma~\ref{lem:useful}, we see that the systems $L_B$ of interest in \eqref{eq:rough} have rank either one or two, and our two key lemmas deal with these cases respectively. Our proofs for both lemmas were inspired by arguments for analogous results given in \cite{gowers20} and in~\cite{sw17}.

We will define an operator $G = G_n^{\alpha}$ that converts any function to one with small Fourier coefficients. Our choice of $\psi^*$ will be a translate of $G[\psi]$, where $\psi$ is the function defined in \eqref{eq:psi}. The property of having small Fourier coefficients used with  \eqref{eq:single-eq} will allow us to show that, for any system $L_B$ consisting of a single equation, $\Lambda_{L_B}(\psi^*)$ is vanishingly small as $n \rightarrow \infty$ and hence is negligible in our consideration of the sum in \eqref{eq:rough}. 

In order to define the operator, we will first introduce some notation.
Let $q$ be a prime power, say $q=p^{\kappa}$ for a prime $p$ and a non-negative integer $\kappa$, and let $d\le n$ be  non-negative integers. For a function $f: \F_q^d \to \R$ 
define $\fdag: \Fqn \to \R$ by 
        \begin{equation} \label{eq:f-ext}
            \fdag (x_1, \dots, x_n) = f(x_1, \dots, x_d).
        \end{equation}%
Now, let $t$ be an integer and let $\alpha = (\alpha_1, \dots, \alpha_t) \in \F_q^t.$ 
For a function $f: \F_q^d \to \R$ 
define the function $\gownn{f}: \Fqn \to \R$ by 
\begin{align}\label{def:gowers}
	\gownn{f}(x) = \frac{1}{2t} \fdag(x) \sum_{j=1}^t  \big( \omega^{\Tr(\alpha_j ( x \cdot x ))} + \omega^{- \Tr(\alpha_j (x \cdot x ))} \big),
\end{align}
where $\omega =\exp(2\pi i/p),$ and $\Tr:\F_q\to\F_p$ is the standard trace map. 
We note at this point that for every $f: \F_q^d \to \R$ and every $x\in\F_q^n,$ we have $| \gownn{f}(x) | \le \max_{y}| f(y) |,$ where the maximum is over $y\in\F_q^d.$ 

Our first key lemma shows that by applying the operator $\gownn{f}$ to any given function $f:\F_q^d \to [-1/2,1/2],$ the Fourier coefficients can be made arbitrarily small by choosing $n$ large enough. The crucial property of the operator that allows this bound is that the function $f^{\dagger}$ depends only on the first $d$ coordinates of its argument.

\begin{lem} \thlabel{l:fhat-zero}
	Let $q$ be an odd prime power, let $d\le n$ and $t$ be integers, let $f:\F_q^d \to [-1/2,1/2]$ be a function, and let $\alpha\in (\F_q^\times)^t$ be given.  
	Then for all $r \in \widehat{\F_q^n}$, we have $|\widehat{\gownn{f}}(r)| \le  q^{d-n/2}.$
\end{lem}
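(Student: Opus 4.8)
The plan is to write out the Fourier coefficient $\widehat{\gownn{f}}(r)$ explicitly using the definition~\eqref{def:gowers} and then bound each of the resulting exponential sums by a Gauss-sum–type estimate. Recall that $\widehat{\gownn{f}}(r) = \E_{x \in \F_q^n}\, \gownn{f}(x)\,\omega^{-\Tr(r\cdot x)}$. Substituting~\eqref{def:gowers} and~\eqref{eq:f-ext}, this becomes
\[
	\widehat{\gownn{f}}(r) = \frac{1}{2t}\sum_{j=1}^t \Bigl( S_j^+ + S_j^- \Bigr),
	\qquad
	S_j^{\pm} := \E_{x\in\F_q^n}\; f(x_1,\dots,x_d)\,\omega^{\pm\Tr(\alpha_j (x\cdot x))}\,\omega^{-\Tr(r\cdot x)}.
\]
So it suffices to show $|S_j^{\pm}| \le q^{d-n/2}$ for each $j$, since then $|\widehat{\gownn{f}}(r)| \le \frac{1}{2t}\cdot 2t \cdot q^{d-n/2} = q^{d-n/2}$.

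For a fixed $j$ and a fixed sign, write $x = (x', x'')$ with $x' = (x_1,\dots,x_d)$ and $x'' = (x_{d+1},\dots,x_n)$, and similarly split $r = (r', r'')$. The key point is that $x\cdot x = x'\cdot x' + x''\cdot x''$ and $r\cdot x = r'\cdot x' + r''\cdot x''$, so the summand factors as a function of $x'$ times a function of $x''$. The $x'$-part is bounded trivially in absolute value by $\max_y |f(y)| \le 1/2$ times $q^d$ (there are $q^d$ values of $x'$), but actually I only need the crude bound: $\bigl|\E_{x'} f(x')\,\omega^{\pm\Tr(\alpha_j(x'\cdot x'))}\,\omega^{-\Tr(r'\cdot x')}\bigr| \le 1/2$. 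The real work is the $x''$-part: I must show
\[
	\Bigl| \E_{x''\in\F_q^{n-d}} \omega^{\pm\Tr(\alpha_j (x''\cdot x''))}\,\omega^{-\Tr(r''\cdot x'')} \Bigr| \le q^{-(n-d)/2}.
\]
This is a standard quadratic Gauss sum over $\F_q^{n-d}$: the exponent is $\pm\Tr\bigl(\alpha_j\sum_{i} x_i^2\bigr) - \Tr\bigl(\sum_i r_i'' x_i\bigr)$, which splits as a product over the $n-d$ coordinates of one-variable sums $\E_{u\in\F_q}\omega^{\Tr(\pm\alpha_j u^2 - r_i'' u)}$. Each such sum has absolute value exactly $q^{-1/2}$ because $\pm\alpha_j \ne 0$ (here is where the hypothesis $\alpha \in (\F_q^\times)^t$ and $q$ odd is used — completing the square requires dividing by $2\alpha_j$, and the nonzero quadratic Gauss sum over $\F_q$ has modulus $q^{1/2}$; normalizing by $|\F_q| = q$ gives $q^{-1/2}$). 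Multiplying over $n-d$ coordinates gives the bound $q^{-(n-d)/2}$.

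Combining: $|S_j^{\pm}| \le \tfrac12 \cdot q^d \cdot q^{-d} \cdot$ — wait, more carefully, $\E_{x} = \E_{x'}\E_{x''}$ already incorporates the normalization, so $|S_j^{\pm}| \le (1/2)\cdot q^{-(n-d)/2} \le q^{d - n/2}$ (crudely using $q^{-(n-d)/2} = q^{d/2 - n/2 + d/2 - d/2}\le q^{d-n/2}$ since $d \ge d/2$ for $d\ge 0$ and we can absorb the $1/2$). Summing over $j$ and the two signs and dividing by $2t$ preserves the bound. I expect the main obstacle to be bookkeeping: correctly tracking the normalization constants in the expectation notation and verifying the standard fact that the one-dimensional twisted quadratic Gauss sum $\sum_{u\in\F_q}\omega^{\Tr(cu^2 + bu)}$ has modulus exactly $\sqrt q$ for $c\ne 0$ (for $q$ odd), which requires completing the square in $\F_q$ and citing the classical Gauss sum evaluation — this is where oddness of $q$ is essential. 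Everything else is routine algebra.
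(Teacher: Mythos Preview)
Your proof is correct. Splitting $x=(x',x'')$ factors $S_j^\pm$ as an $x'$-average (bounded trivially by $\tfrac12$) times an $x''$-average, and the latter is a product of $n-d$ one-variable quadratic Gauss sums over $\F_q$, each of normalised modulus exactly $q^{-1/2}$ since $\pm\alpha_j\neq 0$ and $q$ is odd. This yields $|S_j^\pm|\le \tfrac12\, q^{-(n-d)/2}\le q^{d-n/2}$, and the triangle inequality over $j$ and the two signs finishes. (Your closing bookkeeping was tangled, but the arithmetic is fine: $-(n-d)/2=d/2-n/2\le d-n/2$.)

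The paper argues differently. Rather than factor into one-dimensional Gauss sums, it writes $\fdag=\sum_{z\in\F_q^d} f(z)\,\ind_{W_z}$ as a combination of $q^d$ indicators of affine translates of $W=0^d\times\F_q^{n-d}$, and then invokes an auxiliary result (\thref{lem:Fourier-calculation}) bounding $\bigl|\E_{x\in\F_q^n}\ind_{W_z}(x)\,\omega^{\Tr(\beta(x\cdot x)-r\cdot x)}\bigr|\le q^{-n/2}$ via the squaring trick $|S|^2=\sum_{u,v}\omega^{\ldots}$ applied to the full $(n-d)$-dimensional sum, which reduces it to a linear character sum. Summing over the $q^d$ values of $z$ then gives $q^{d-n/2}$. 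The payoff of the paper's detour is reusability: \thref{lem:Fourier-calculation} is stated for general quadratic forms $Q(x,y)$ on cosets of $W$ and is applied again in the proof of \thref{l:gowers}, where genuinely two-variable forms arise that do not split coordinatewise. Your route is more elementary and self-contained for this lemma (and in fact delivers the sharper $\tfrac12\,q^{(d-n)/2}$), but it leans on the diagonal shape of $x\cdot x$ and would need modification to handle those later mixed forms.
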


We postpone the proof to Subsection~\ref{subs:Fourier}. An analogous result was proved in~\cite{gowers20} and in~\cite{sw17} for the special case when $L$ describes a 4-AP and $\F_q^d$ is either $\Z_n$ or $\F_5.$ In fact, we only apply the lemma when $t=4$, but we include the general statement here in anticipation of wider applicability. 

As mentioned above, \thref{l:fhat-zero} allows us to show that the contribution to the sum in \eqref{eq:rough} from critical subsystems of rank one is negligible. So we turn our attention to the consideration of the rank-two critical subsystems. Our second key lemma shows that we are able to choose some $\alpha$ depending on a particular rank-two critical subsystem $L^*$ satisfying $s(L^*) = 3$, such that  $\Lambda_{L^*}(\psi^*)$ contributes a dominant negative term to the sum in \eqref{eq:rough} (and the other critical subsystems either contribute negligibly or also provide a large negative term). Hence our key lemmas together ensure that \eqref{eq:rough} is satisfied.

\begin{lem} \thlabel{l:gowers}
	Let $q$ be an odd prime power and let $L^*$ be a $(2 \times 4)$-system over $\F_q$ with $s(L^*)=3.$ Let $d\le n$ be non-negative integers, let $f: \F_q^d \to \left[-\frac 12, \frac 12 \right]$ be a  function, and let $\fdag: \Fqn \to \R$ be defined as above. Then there exists an $\alpha \in (\F_q^{\times})^4$ such that the following holds. For every $(2 \times 4)$-system $L$ over $\F_q$ there is a non-negative integer $K_L$ such that 
	$$ \Big|\Lambda_L(\gownn{f}) - 2^{-12} K_L \Lambda_L (f)\Big| \leq 16\; q^{2d-n/2}.$$ 
	Moreover, $K_{L^*}\ge 1.$
\end{lem}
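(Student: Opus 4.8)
\textbf{Proof plan for \thref{l:gowers}.}

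The plan is to expand $\Lambda_L(\gownn{f})$ directly from the definition~\eqref{def:gowers} of the operator $\gownn{\cdot}$, multiplying out the four factors $\gownn{f}(x_i)$ for $i \in [4]$. Each factor contributes $\frac{1}{2t}f^\dagger(x_i)$ times a sum of $2t$ terms of the shape $\omega^{\pm\Tr(\alpha_{j_i}(x_i\cdot x_i))}$; expanding the product gives $(2t)^{-4} = 2^{-12}$ (since $t=4$) times a sum over $2^4$ sign choices $\eta\in\{-1,+1\}^4$ and $4^4$ index choices $(j_1,j_2,j_3,j_4)\in[4]^4$ of the quantity
\begin{equation*}
	T_{\eta,\mathbf{j}}(L) := \frac{1}{|\sol(L;\Fqn)|}\sum_{\bx\in\sol(L;\Fqn)} f^\dagger(x_1)f^\dagger(x_2)f^\dagger(x_3)f^\dagger(x_4)\,\omega^{\sum_{i=1}^4 \eta_i\Tr(\alpha_{j_i}(x_i\cdot x_i))}.
\end{equation*}
The heart of the argument is to split these $2^{12}$ terms into two classes: those for which the quadratic phase $\sum_i \eta_i\alpha_{j_i}(x_i\cdot x_i)$ vanishes identically on the solution space $\sol(L;\Fqn)$ — call these the \emph{degenerate} terms — and the rest. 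A degenerate term equals $\Lambda_L(f^\dagger) = \Lambda_L(f)$ exactly (the last equality because $f^\dagger$ depends only on the first $d$ coordinates and $\sol(L;\Fqn)$ is, row by row, a product of copies of $\sol(L;\F_q)$, exactly as in \thref{ob:n-vec}). Letting $K_L$ be the number of degenerate terms, the main claim becomes that each non-degenerate term is bounded by $16\,q^{2d-n/2}$, after which summing $2^{-12}$ times everything and using $|f^\dagger|\le 1/2$ on the at most $2^{12}$ non-degenerate terms yields the stated inequality (the crude constant $16$ absorbs the $2^{12}\cdot 2^{-12}$ and the $|f|\le 1/2$ factors).

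To bound a non-degenerate term I would relate it to the Fourier coefficients controlled by \thref{l:fhat-zero}. The cleanest route: observe that a quadratic exponential sum $\sum_{x\in\Fqn}\omega^{\Tr(\lambda(x\cdot x))}$ with $\lambda\neq 0$ is a Gauss sum of modulus $q^{n/2}$, and more relevantly, for a fixed nonzero $\lambda$ the function $x\mapsto \omega^{\Tr(\lambda(x\cdot x))}$ has all Fourier coefficients of modulus exactly $q^{-n/2}$. Then $T_{\eta,\mathbf{j}}(L)$, once the linear constraints of $L$ are used to parametrise $\sol(L;\Fqn)$ by two free variables, becomes (up to the $f^\dagger$ weights, which only depend on $d$ coordinates and so cost a factor $q^{2d}$) an average of such quadratic-phase sums in the free variables; provided the quadratic form $\sum_i\eta_i\alpha_{j_i}(x_i\cdot x_i)$ does not degenerate to a constant on $\sol(L;\Fqn)$, at least one of the two free variables appears with a nonzero quadratic coefficient, and the inner sum over that variable is $O(q^{n/2})$ by the Gauss-sum bound. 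Dividing by $|\sol(L;\Fqn)| = q^{2n}$ and accounting for the $q^{2d}$ from the $f^\dagger$-weights and the at-most-$2$ free-variable count gives the $O(q^{2d-n/2})$ bound. Alternatively — and this is how the paper likely proceeds given that \thref{l:fhat-zero} is stated as the key tool — one writes $T_{\eta,\mathbf{j}}(L)$ as a sum over $r\in\widehat{\Fqn}$ of products of Fourier coefficients of the functions $g_i(x) := f^\dagger(x)\omega^{\eta_i\Tr(\alpha_{j_i}(x\cdot x))} = \gownn[i]{f}$-type objects (each of these is, up to the averaging over $j$, exactly a summand of $\gownn{f}$), exploits the standard identity~\eqref{eq:single-eq}-type expansion for the linear system $L$, and invokes \thref{l:fhat-zero} to bound each coefficient by $q^{d-n/2}$; two such factors give $q^{2d-n}\cdot q^n = q^{2d}$ after summing the geometric-type series over $r$, which is not quite what we want, so one must be slightly more careful and keep one unbounded factor estimated trivially by $1/2$ and two bounded by $q^{d-n/2}$, with a Cauchy–Schwarz / Parseval step controlling the $r$-sum — this bookkeeping is exactly the place the constant $16$ comes from.

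The final and most delicate point is the claim $K_{L^*}\ge 1$, i.e. that there is a choice of $\alpha\in(\F_q^\times)^4$ making at least one term degenerate for the \emph{specific} rank-two system $L^*$ with $s(L^*)=3$. Since $s(L^*)=3$, after row reduction $L^*$ is induced by two equations each of which, in suitable coordinates, has a zero coefficient in exactly one variable, so $\sol(L^*;\F_q)$ is parametrised by two variables and each $x_i$ is an $\F_q$-linear combination $\sum_{p} c_{ip} z_p$ of the two free variables $z_1, z_2$. The condition that $\sum_{i=1}^4 \eta_i\alpha_{j_i}(x_i\cdot x_i)$ vanishes on $\sol(L^*)$ is then the condition that the quadratic form $\sum_i \eta_i\alpha_{j_i}(\sum_p c_{ip}z_p)\cdot(\sum_p c_{ip}z_p)$ in $z_1,z_2$ is identically zero, which amounts to three linear equations on $(\alpha_1,\dots,\alpha_4)$ (coefficients of $z_1\cdot z_1$, $z_1\cdot z_2$, $z_2\cdot z_2$) — and the task is to show this $3\times 4$ homogeneous linear system over $\F_q$ has a solution with all coordinates nonzero, for an appropriate sign vector $\eta$. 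Here I expect to use that $s(L^*)=3$ forces genericity of the coefficient matrix $(c_{ip})$ — in particular no row $c_{i\cdot}$ is zero (else some variable is free and $L^*$ would induce a shorter equation or be redundant), and the $2\times 2$ minors are controlled — so that the $3\times 4$ system has a $1$-dimensional solution space whose generator has no zero coordinate, possibly after flipping signs in $\eta$. This is the main obstacle: it is the one step that genuinely uses the structural hypothesis $s(L^*)=3$ rather than just generic Fourier-analytic estimates, and it will require a careful case analysis (or a clean linear-algebra lemma) on the possible shapes of a $(2\times4)$-system of minimal length $3$. I would isolate this as a separate combinatorial-linear-algebra claim and prove it by classifying such $L^*$ up to the obvious symmetries (permuting the four variables and scaling), checking in each case that the resulting $3\times 4$ system admits an everywhere-nonzero solution.
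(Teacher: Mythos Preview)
Your overall strategy matches the paper's: expand the fourfold product from~\eqref{def:gowers}, split into terms where the combined quadratic phase vanishes identically on $\sol(L;\Fqn)$ (each contributing $\Lambda_L(f)$) versus terms where it does not, and bound the latter by a Gauss-type estimate. Two places where your execution diverges from the paper deserve comment.

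For the non-degenerate bound, your Route~1 contains a gap. After parametrising $\sol(L;\Fqn)$ by $(x,y)\in(\Fqn)^2$, the restricted phase has the shape $A(x\cdot x)+B(x\cdot y)+C(y\cdot y)$; you assert that non-degeneracy forces $A\ne 0$ or $C\ne 0$, but the case $A=C=0$, $B\ne 0$ is possible, and there neither one-variable inner sum is a genuine Gauss sum (for fixed $y$ the phase is linear in $x$, and the inner sum can be as large as $q^n$ for some $y$). The paper does \emph{not} use \thref{l:fhat-zero} here---so your Route~2 is not what the paper does, and your own sketch of it does not close---but instead proves a tailored auxiliary estimate, \thref{lem:Fourier-calculation}: it writes $f^\dagger$ as a linear combination of the $q^d$ indicators of cosets of $0^d\times\F_q^{n-d}$, reducing the problem to bounding $\E_{x,y}\,\ind_{W_1}(x)\ind_{W_2}(y)\,\omega^{\Tr(Q(x,y))}$ for such cosets, and then handles the case $A=C=0$, $B\ne0$ separately by a direct orthogonality argument.

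For $K_{L^*}\ge 1$, your $3\times 4$ linear-system argument would work but is unnecessarily indirect. The paper simply parametrises $\sol(L^*;\Fqn)$ as $\{(x,y,a_1^*x+a_2^*y,b_1^*x+b_2^*y)\}$, observes that $s(L^*)=3$ is exactly the condition that $a_1^*,a_2^*,b_1^*,b_2^*$ and the determinant $a_1^*b_2^*-a_2^*b_1^*$ are all nonzero, and writes down explicit rational expressions $\alpha_i=\alpha_i(a_1^*,a_2^*,b_1^*,b_2^*)\in\F_q^\times$ for which $\alpha_1|x|^2+\alpha_2|y|^2+\alpha_3|a_1^*x+a_2^*y|^2+\alpha_4|b_1^*x+b_2^*y|^2\equiv 0$; no case analysis or sign-flipping is needed.
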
        

The proof of \thref{l:gowers} is included in Subsection~\ref{subs:Fourier}.

\subsection{The main proof} 
We now apply our two key lemmas (\thref{l:fhat-zero} and \thref{l:gowers}) to prove \thref{t:4APs}. 
\begin{proof}[Proof of \thref{t:4APs}]
	Let $q$ be an odd prime power, and let $2 \le m < k$ be integers. Let $L^*$ be a $(2 \times 4)$-system and let $L$ be an irredundant $(m\times k)$-system inducing $L^*,$ say on the subset $D\se [k]$. Note that this necessarily implies that $L^{\ast}$ is irredundant as well. 
	Since $L^*$ is a $(2\times 4)$-system induced by $L$, it follows that $s(L) \le 3$. 
	If $s(L) = 1$, then $L$ induces an equation $x_i = 0$, for some $i \in [k]$. Considering the set $A = \{0\}$ and taking $n$ sufficiently large gives $$|\sol(L;A)| + |\sol(L;\compl{A})| = 1 <  2^{1-k}q^{n(k-m)}= 2^{1-k}|\sol(L;\Fqn)|,$$ where the final equality is simply  \thref{ob:n-vec}. Hence, $L$ must be uncommon in this case, by~\eqref{eq:commonSet}. 
    The case $s(L) = 2$ directly follows from \thref{c:twos}.  
	
	So suppose $s(L)=3.$ 
	Let $d,\nu>0$ and a balanced function $\psi:\F_q^d\to[-1/4,1/4]$ be given by \thref{l:psi-neg} so that 
	\begin{align}\label{eq:aux431}
		\Lambda_{L'}(\psi) < -\nu 
	\end{align}
	for any $(2\times 4)$-system $L'$ with $s(L')=3.$ 
	Let $n\ge 8d$ be a large enough integer and 
	let $\alpha \in (\F_q^\times)^4$ be given by \thref{l:gowers} (applied with $f=\psi$) with the property that
	\begin{align}\label{eq:aux885}
		\Big|\Lambda_{L'}(\gownn{\psi}) - 2^{-12} K_{L'} \Lambda_{L'} (\psi)\Big| \leq 16\; q^{2d-n/2} \le 16q^{-n/4}
	\end{align}
	for every $(2\times4)$-system $L'$, where $K_{L'}$ is a non-negative integer, and $K_{L^*}\ge 1.$ 
	Let $G_n=\gownn{\psi},$ %be the function defined in~\eqref{def:gowers}, 
	let $\mu = \er{G_n} = \widehat{G_n}(0),$ 
	and note that 
	\begin{align}\label{eq:aux801}
	|\mu|\le q^{d-n/2}\le q^{-n/4}\le 1/4,
	\end{align}
	by \thref{l:fhat-zero} and assumption on $n.$ 	
	We define $\psi^*: \Fqn \to \left[- \frac 12, \frac 12 \right]$ by 
	%                \begin{equation}
	$\psi^*(x) = G_n(x) - \mu$,
	%                \end{equation}
	which gives $\sum_{x \in \Fqn} \psi^*(x) = 0$, 
	and remark that, indeed, 
	$$|\psi^*(x)|\le |G_n(x)|+|\mu|\le \textstyle\max_y |\psi(y)|+|\mu|\le 1/4+1/4 = 1/2$$
	for all $x\in \F_q^n.$ 
	We will show that
	\begin{equation} \label{eq:sl3}
		\sum_{B \in \Cc(L)} \Lambda_{L_B}(\psi^*) < - 2^{-12}\nu + O(q^{-n/4}),
	\end{equation}
	where the implicit constant in big-$O$ does not depend on $n.$  
	By Lemma~\ref{lem:useful}, for each $B \in \Cc(L)$, the system $L_B$ is a $(t \times 4)$-system, for $t \in \{1,2\}$. For $i=1, 2,$ let $\Cc_i(L)$ be the set of $B\in\Cc(L)$ such that $L_B$ has rank $i$. 
	
	Consider first $B \in \Cc_1(L).$ 
	Then by~\eqref{eq:single-eq},
	\begin{align*} 
		\Lambda_{L_B}(\psi^*)= \sum_{r \in \widehat{\mathbb{F}^n_q}} \widehat{\psi^*}(b_1 r) \cdots \widehat{\psi^*}(b_4 r),
	\end{align*} 
where $b_1,b_2,b_3,b_4\in\F_q$ are the coefficients of (the single equation) $L_B.$ 
	Now, $\widehat{\psi^*}(0) = \er{\psi^*} = 0$ and, for $r\neq 0,$ we have that 
	$|\widehat{\psi^*}(r)| = |\widehat{G_n}(r)| \le q^{d-n/2},$ by \thref{l:fhat-zero}.	It follows that 
	\begin{align*} 
		\big|\Lambda_{L_B}(\psi^*)\big| \le q^{n+4(d-n/2)} =  q^{4d-n} \le q^{-n/2}
	\end{align*} 
	for any such $B$. 

	Now consider $B \in \Cc_2(L).$ Then 

	\begin{align*}
		\Lambda_{L_B}(\psi^*) =  \frac{1}{|\sol(L_B;\mathbb{F}_q^n)|}\sum_{\xv \in  \sol(L_B;\mathbb{F}_q^n)} \sum_{S \subseteq [4]}(- \mu)^{4-|S|}\prod_{j \in S}G_n(x_j).\\
		\end{align*}
	The term given by $S = [4]$ is $\Lambda_{L_B}(G_n)$ and hence 
	\begin{align}\label{eq:aux059}
	\big|\Lambda_{L_B}(\psi^*) - \Lambda_{L_B}(G_n) \big| \le 15|\mu|,
	\end{align} 
	as $|\mu| < 1 $ and $|G_n(x)| < 1$ for any $x \in \mathbb{F}_q^n$, by \eqref{def:gowers}. 
	Now, the bound $|\mu|\le q^{-n/4}$ from \eqref{eq:aux801}, 
	%By definition of $\mu,$ \thref{l:fhat-zero} and assumption on $n$, we have $|\mu| = | \widehat{G_n}(0) | \le q^{d-n/2}\le q^{-n/4}.$ This, 
	together with~\eqref{eq:aux059} and~\eqref{eq:aux885} applied with $L' = L_B$ implies that 
	\begin{align}\label{eq:aux055}
		\big|\Lambda_{L_B}(\psi^*) - 2^{-12} K_{L_B} \Lambda_{L_B} (\psi)\big| \le 2^5q^{-n/4}.
	\end{align}
	In particular, using~\eqref{eq:aux431} to bound each $\Lambda_{L_B}(\psi),$ and since there are at most $\binom{k}{4}$ sets $B$, we have 
		\begin{align*}
			\sum_{B \in \mathcal{C}(L)}\Lambda_{L_B}(\psi^*) \le -2^{-12}\nu\sum_{B \in \mathcal{C}_2(L)}K_{L_B} + (2k)^4 q^{-n/4}.
		\end{align*}
		
	Now for some $B \in \mathcal{C}_2(L)$ we have that $L_B$ is equivalent to $L^*$ since $L$ induces $L^*.$ Thus $K_{L_B} = K_{L^*} \ge 1$ for some $B\in \mathcal{C}_2(L).$ Since also $K_{L_B}$ is non-negative for every $B \in \mathcal{C}_2(L)$, we obtain that $\sum_{B \in \mathcal{C}_2(L)}K_{L_B} \ge 1$, which implies that 
	$$\sum_{B \in \Cc(L)} \Lambda_{L_B}(\psi^*) < -2^{-12}\nu + (2k)^4 q^{-n/4} \le - 2^{-13}\nu,$$ 
	for sufficiently large $n$. Theorem~\ref{t:main-uncommon} implies that the system $L$ is uncommon.
\end{proof}

\subsection{Proofs of \thref{l:fhat-zero,l:gowers}}\label{subs:Fourier}

We first prove an auxiliary lemma that will be used in the proofs of both our key lemmas. Given a prime power $q$, say $q=p^\kappa$ where $p$ is prime, recall the trace map $\Tr:\F_q\to \F_p$ which is linear as a map between vector spaces over $\F_p$. Apart from linearity of $\Tr$, we use that it is non-degenerate, that is 
$\Tr(xy)=0$ for all $x\in\F_q$ if and only if $y=0.$  For $d \in \N$, write $0^d$ to denote the $d$-dimensional zero vector. 

\begin{lem}\thlabel{lem:Fourier-calculation}
	Let $q$ be an odd prime power, let $n\in\N,$ let $W:= 0^d \times \F_q^{n-d}$ and for $i=1,2$ and $x_i\in\F_q^d,$ let $W_i=(x_i,0^{n-d})+W$ be an affine subspace. For $\alpha,\beta,\gamma\in \F_q$, let $Q(x,y)= \alpha ( x \cdot x) + \beta ( x\cdot y) +\gamma ( y\cdot y)$ be a quadratic form on $\F_q^n\times \F_q^n$.  Then 
	\begin{enumerate}
		\item \label{aux:item1} if $\alpha\neq 0$,  for every $z\in \F_q^n$ we have $\big|\E_{x\in \F_q^n} \ind_{W_1}(x)\omega^{\Tr(Q(x,z))} \big| 
		\le q^{-n/2}$ ; and 
		\item \label{aux:item2} if $Q(x,y)\not\equiv 0$, we have  $\big|\E_{x,y\in \F_q^n} \ind_{W_{1}}(x)\ind_{W_2}(y) \omega^{\Tr(Q(x,y))} \big| \le q^{-n/2}.$  
	\end{enumerate}
\end{lem}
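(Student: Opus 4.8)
The plan is to prove both items by a direct Gauss-sum computation, reducing everything to the classical fact that a non-degenerate quadratic form over $\F_q$ (with $q$ odd) has Gauss sum of absolute value $q^{1/2}$, and an exponential sum with a genuinely present linear term vanishes.

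For \eqref{aux:item1}, write $x = (u,v)$ with $u \in \F_q^d$, $v \in \F_q^{n-d}$, so that $\ind_{W_1}(x)$ forces $u = x_1$. Then $x \cdot x = x_1 \cdot x_1 + v \cdot v$ and, writing $z = (z',z'')$, $x \cdot z = x_1 \cdot z' + v \cdot z''$; similarly the full quadratic form $Q(x,z)$, once restricted to $W_1$, becomes $c + (\ell \cdot v) + \alpha(v\cdot v)$ for a constant $c$ depending on $x_1,z$ and a vector $\ell \in \F_q^{n-d}$ depending on $x_1,z,\beta$. So
\[
	\E_{x\in\F_q^n}\ind_{W_1}(x)\,\omega^{\Tr(Q(x,z))}
	= q^{-d}\,\omega^{\Tr(c)}\,\E_{v\in\F_q^{n-d}}\omega^{\Tr(\alpha(v\cdot v)+\ell\cdot v)}.
\]
The remaining sum factors over the $n-d$ coordinates of $v$ into one-dimensional sums $\sum_{v_i}\omega^{\Tr(\alpha v_i^2+\ell_iv_i)}$. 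Since $\alpha\neq 0$ and $q$ is odd, completing the square (using invertibility of $2\alpha$) turns each into a standard quadratic Gauss sum, of modulus exactly $q^{1/2}$. Hence the product has modulus $q^{(n-d)/2}$, and the whole expression has modulus $q^{-d}\cdot q^{(n-d)/2} = q^{-(n+d)/2} \le q^{-n/2}$, as claimed. (I should double-check that the $q^{-d}$ prefactor only helps; it does.)

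For \eqref{aux:item2}, restrict both variables: $\ind_{W_1}(x)$ forces the first $d$ coordinates of $x$ to be $x_1$, $\ind_{W_2}(y)$ forces the first $d$ of $y$ to be $x_2$. Writing the free parts $v,w \in \F_q^{n-d}$, the form $Q(x,y)$ becomes $c + (\ell_1\cdot v) + (\ell_2\cdot w) + \alpha(v\cdot v) + \beta(v\cdot w) + \gamma(w\cdot w)$ for constants and linear vectors depending on $x_1,x_2$. The expectation is then $q^{-2d}\,\omega^{\Tr(c)}$ times an exponential sum over $(v,w)\in(\F_q^{n-d})^2$ of a (possibly inhomogeneous) quadratic form in $2(n-d)$ variables. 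The case split is on whether the \emph{quadratic part} of this restricted form in $(v,w)$ is non-degenerate: if the $2\times 2$ matrix $\bigl(\begin{smallmatrix}\alpha & \beta/2\\ \beta/2 & \gamma\end{smallmatrix}\bigr)$ is invertible (equivalently $\alpha\gamma \ne \beta^2/4$), the sum factors over the $n-d$ independent $2$-dimensional blocks $(v_i,w_i)$, each contributing a non-degenerate two-variable Gauss sum of modulus $q$, giving total modulus $q^{n-d}$ and overall bound $q^{-2d}q^{n-d}=q^{-n}\cdot q^{d}$... wait — here I need to be careful: $q^{-2d}\cdot q^{n-d}$ is not obviously $\le q^{-n/2}$ unless $n \ge \tfrac{2}{3}(\ldots)$; in fact $q^{-2d}q^{n-d}=q^{n-3d}$, which is $\le q^{-n/2}$ precisely when $n \ge \tfrac{6d}{?}$... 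Actually the correct normalization: the sum over the \emph{full} space $\F_q^n\times\F_q^n$ of a non-degenerate $2n$-variable quadratic form has modulus $q^n$, and $\E = q^{-2n}\cdot q^n = q^{-n}$; restricting to affine subspaces of codimension $d$ each, the count is $q^{2(n-d)}$ and we divide by $q^{2n}$, giving modulus $\le q^{-2d}\cdot q^{n-d}$. So I should instead state the bound as $q^{-(n-d)}\cdot q^{-2d}\cdot$... Let me just commit to: the inner sum over $(v,w)$ has modulus at most $q^{n-d}$ when non-degenerate, and the prefactor is $q^{-2d}$, so the bound is $q^{n-3d}$; since in all applications $n \ge 8d$ (indeed the lemma is invoked with $n$ huge relative to $d$) this is $\le q^{-n/2}$. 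I will phrase the lemma's hypotheses to reflect $d \le n$ suffices once we also use $q^{-d}\le 1$: actually re-examining, with $x$ ranging only over $W_1$ the average is over $q^n$ points but $\ind_{W_1}$ is supported on $q^{n-d}$ of them, so $\E_{x}\ind_{W_1}(x)(\cdots) = q^{-n}\sum_{x\in W_1}(\cdots) = q^{-n}\cdot(\text{sum of }q^{n-d}\text{ terms})$, and a non-degenerate quadratic Gauss sum over $W_1\times W_2$ (dimension $2(n-d)$) has modulus $q^{n-d}$, so the average has modulus $q^{-2n}\cdot q^{n-d}=q^{-n-d}\le q^{-n/2}$. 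Good — that is clean.

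If instead the quadratic part in $(v,w)$ is degenerate, then since by hypothesis $Q\not\equiv 0$ the restricted form cannot be identically zero (its quadratic part in $x,y$ is $Q$ itself, and a direct check shows the restriction to the $W_i$ keeps it nonzero as a polynomial in $v,w$ — this uses that $W$ has positive codimension... actually this needs thought: it is conceivable $Q$ restricted to $W_1\times W_2$ is constant). Here I diagonalize: a degenerate nonzero quadratic polynomial in $(v,w)$ over $\F_q$ either has a nontrivial linear term after change of variables in a direction where the quadratic part vanishes — giving a full cancellation to $0$ — or splits off at least one non-degenerate variable while the rest is constant, contributing a one-dimensional Gauss sum of modulus $q^{1/2}$ per such coordinate and bounded total. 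In every subcase the average is $O(q^{-n/2})$, with room to spare. \textbf{The main obstacle} I anticipate is the bookkeeping in this degenerate case of \eqref{aux:item2}: carefully arguing that $Q\not\equiv 0$ on $\F_q^n\times\F_q^n$ forces the \emph{restricted} form on $W_1\times W_2$ to be non-constant (or to have a surviving linear term yielding exact cancellation), and handling the mixed degenerate situations without a messy proliferation of subcases. Organizing the argument via the rank and the linear part of the restricted quadratic polynomial — rather than by ad hoc coordinate chases — should keep it manageable.
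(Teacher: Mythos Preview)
Your approach is correct and genuinely different from the paper's. For part (i) the paper uses Weyl differencing: it squares the sum $\sum_{w\in W}\omega^{\Tr(\alpha(w\cdot w)+\beta(w\cdot z))}$, substitutes $u=v-w$, and applies character orthogonality to obtain $|\text{sum}|^2\le |W|$, never evaluating a Gauss sum. For part (ii) the paper simply applies Fubini: if $\alpha\neq 0$ (or $\gamma\neq 0$ by symmetry) it bounds the inner average over $x$ by part (i) for each fixed $y$; the residual case $\alpha=\gamma=0$, $\beta\neq 0$ is a pure bilinear sum handled by one use of orthogonality. This is short and self-contained, and the reduction of (ii) to (i) avoids any case analysis on the rank of $Q$.

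Your route via coordinate factorisation into one- and two-variable Gauss sums works equally well and even gives the sharper bound $q^{-(n+d)/2}$ in (i). Two remarks that will clean up your write-up. First, there is a slip in (i): the product of the one-dimensional Gauss \emph{sums} has modulus $q^{(n-d)/2}$, hence the \emph{expectation} over $v$ has modulus $q^{-(n-d)/2}$; combined with the prefactor $q^{-d}$ this indeed gives $q^{-(n+d)/2}$, so your final answer is right but the intermediate ``$q^{-d}\cdot q^{(n-d)/2}$'' is not. Second, your anticipated ``main obstacle'' in (ii) evaporates: since the first $d$ and last $n-d$ coordinates are orthogonal under the dot product, the restriction of $Q$ to $W_1\times W_2$ has \emph{no} linear part in $(v,w)$ at all --- one has $\ell_1=\ell_2=0$ and the form is exactly $c+\alpha(v\cdot v)+\beta(v\cdot w)+\gamma(w\cdot w)$. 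Thus the restricted quadratic part has the same coefficient matrix as $Q$, is nonzero precisely when $Q\not\equiv 0$, and the double sum factors into $n-d$ identical two-variable Gauss sums, each of modulus $q$ (rank~$2$) or $q^{3/2}$ (rank~$1$); either bound suffices for $q^{-n/2}$. No further subcases are needed.
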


\begin{proof}
	Denote by $\widetilde{x}_1$ the vector $(x_1,0^{n-d}).$ For every $z\in\F_q^n,$ as $ \widetilde{x}_1\cdot w = 0$  for every $w\in W$ and using the linearity of the trace function, we have 
	\begin{align}\label{eq:shift-affine-linear}
		\Big|\sum_{x \in W_1}\omega^{\Tr(Q(x,z))} \Big| 
	 =\Big|\sum_{w \in W}\omega^{\Tr(Q(\widetilde{x}_1+w,z))} \Big| 
	 = \Big|\sum_{w \in W}\omega^{\Tr(\alpha ( w\cdot w) + \beta ( w\cdot z))} \Big|.  
	\end{align}
	We claim that if $z=(z_1,\ldots,z_n),$ then
	\begin{align}\label{eq:basic-sum}
		\sum_{w \in W}\omega^{\Tr(  w\cdot z)} = 
		\begin{cases}
			|W| & \text{ if } z_{d+1}=\ldots=z_n=0,\\
			0 & \text{ otherwise.}
		\end{cases}
	\end{align}
 To see this, first suppose $z_{d+1}=\ldots=z_n=0$. Then for any $w \in W$, we have $\Tr(w\cdot z) = \Tr(0) =  0.$ For the other case, suppose that for some $i \in \{d+1,\ldots, n\}$, we have $z_i \not= 0$. Then, by the trace map properties, there exists some $\lambda \in \F_q$, such that $\Tr(\lambda z_i) \not=0$. Therefore, letting ${\bf e}_i$ be the $i$-th standard basis vector, we have $y := \lambda {\bf e}_i \in W$ satisfies $\Tr( y\cdot z) \not= 0$ and 
	$$\sum_{w \in W}\omega^{\Tr( w\cdot z)} = \sum_{w \in W}\omega^{\Tr(( w+y)\cdot  z)} = \omega^{\Tr( y\cdot z)}\sum_{w \in W}\omega^{\Tr( w\cdot z)}.$$
	Hence the left hand side of \eqref{eq:basic-sum} is zero in this case. 
	
Now if $\alpha\neq 0$ then using linearity of the trace map we obtain that 	
	\begin{align*} 
	\Big|\sum_{w \in W}\omega^{\Tr(\alpha (w\cdot w) + \beta (w\cdot z))} \Big|^2
		&=  \Big|\sum_{v,w \in W}\omega^{\Tr(\alpha ( v\cdot v  - w\cdot w) + \beta ( z\cdot (v-w)))} \Big|\\
		&= \Big|\sum_{u,v \in W}\omega^{\Tr ((2\alpha v-\alpha  u+ \beta z)\cdot u)} \Big| \nonumber\\ 
		&\le \sum_{u \in W}\Big| \omega^{\Tr((\beta z - \alpha u)\cdot u )} \Big| \sum_{v \in W} \omega^{\Tr(2\alpha ( v\cdot u))} \Big| \\
		&\le \sum_{u \in W} \Big| \sum_{v \in W} \omega^{\Tr(2\alpha ( v\cdot u))} \Big| 
		\\&=  |W| \le q^n,
	\end{align*} 
	where in the second-to-last step we use~\eqref{eq:basic-sum} and that $2\alpha\neq 0.$ 
	This together with~\eqref{eq:shift-affine-linear} implies that 
	$$\big|\E_{x\in \F_q^n} \ind_{W_1}(x)\omega^{\Tr(Q(x,z))} \big| 
	= q^{-n} \; \big|\sum_{x\in W_1} \omega^{\Tr(Q(x,z))} \big| \le q^{-n/2},$$
	which proves~\ref{aux:item1}. 
	For~\ref{aux:item2}, we see that when $\alpha\neq 0$ the claim follows immediately from~\ref{aux:item1} as 
	$$R:= \big|\E_{x,y\in \F_q^n} \ind_{W_{1}}(x)\ind_{W_2}(y) \omega^{\Tr(Q(x,y))} \big| 
	\le \E_{y\in \F_q^n} \big|\E_{x\in \F_q^n} \ind_{W_1}(x)\omega^{\Tr(Q(x,y))} \big|. 
	$$
	Similarly, the claim follows if $\gamma\neq 0$ by swapping $x$ and $y$. Finally, if $\alpha=\gamma=0$ then $\beta\neq 0$ (as we assume $Q\not\equiv 0$). In this case, 
	\begin{align*}
		R \le q^{-2n}\; \sum_{y \in \F_q^n} \big| \sum_{x \in W_1} \omega^{\Tr(Q(x,y))} \big| 
		= q^{-2n}\; \sum_{y \in \F_q^n} \big| \sum_{w \in W} \omega^{\Tr(\beta ( w\cdot y))} \big|
		= q^{-2n}q^{d} |W| =q^{-n},
	\end{align*}
	by~\eqref{eq:shift-affine-linear} and~\eqref{eq:basic-sum}, respectively. 
\end{proof}

Lemma~\ref{l:fhat-zero} follows from~\thref{lem:Fourier-calculation}~\ref{aux:item1} simply by noticing that $\fdag$ is a linear combination of $q^d$ indicator functions of affine subspaces of $\Fqn$. 

\begin{proof}[Proof of \thref{l:fhat-zero}]
	Given $f$ and $n$, recalling the definition of $\fdag : \F_q^n \to [-1,1]$ from~\eqref{eq:f-ext}, we can write  
	$$\fdag(x) = \sum_{z \in\F_q^d} f(z) \ind_{W_z}(x),$$ where for $z \in \F_q^d$, we have that $W_{z}$ is the affine subspace 
	$(z,0^{n-d}) + W,$ 
	and $W= 0^d \times \F_q^{n-d}$. Then the definition of $\gownn{f}$ in~\eqref{def:gowers} gives
	$$\gownn{f}(x) = \frac{1}{2t} \sum_{j=1}^t \sum_{z \in \F_q^d} 
	f(z) \ind_{W_z}(x) \left( \omega^{\Tr(\alpha_j ( x\cdot x ))} +\omega^{- \Tr(\alpha_j ( x\cdot x ))} \right),$$ 
	for $x\in\F_q^n.$ 
	For $\beta \in\F_q$ and $z\in \F_q^d,$ let $H_{\beta,z}$ be the function defined by 
	$H_{\beta,z}(x) = \ind_{W_z}(x) \; \omega^{\Tr(\beta (x\cdot x) )}.$  
	Then for all $\beta\in\F_q^{\times}$ and all $\chi_r\in \widehat{\F_q^n},$ 
	$$|\widehat{H_{\beta,z}}(\chi_r) | 
	= \big| \E_{x\in\F_q^n} \ind_{W_{\tilde x}}(x) \; \omega^{\Tr(\beta ( x\cdot x)) -\Tr(r\cdot x)}  \big| \le q^{-n/2},$$
	by linearity of the trace function and \thref{lem:Fourier-calculation}\ref{aux:item1}. It follows that 
	
	%\begin{align}\label{eq:aux463}
	$$|\widehat{\gownn{f}}(r)| \le \frac{1}{2t} \sum_{j=1}^t 
	\sum_{z\in \F_q^d} 
	| f(z)| \;  \Big( \big| \widehat{H_{\alpha_j,z}}(r)\big| +\big|\widehat{H_{-\alpha_j,z}}(r)\big| \Big) 
	\le q^{d-n/2},$$
	where we use  linearity of taking the Fourier transform, that $\alpha_j\neq 0$ for all $j\in[t]$ and that $|f(z)|\le 1$ for all $z\in \F_q^d.$
\end{proof}

\begin{proof}[Proof of \thref{l:gowers}]
	Any $(2\times 4)$-system has a 2-dimensional solution space. 
	Up to reordering the variables we may assume that the solution set of $L^*(\xv)=0$ can be parametrised by
	$$\sol(L^*; \Fqn)=\big\{(x, y, a_1^*x + a_2^*y, b_1^*x + b_2^*y): x, y \in \Fqn\big\},$$ 
	 where $a_1^*,a_2^*,b_1^*,b_2^* \in \F_q.$  
	That is, $L^*$ is equivalent to the system 
	%\begin{align}\label{eq:normal-repn}
	$\begin{psmallmatrix}
		a_1^*& a_2^*&-1& 0 \\ b_1^*& b_2^*&0&-1
	\end{psmallmatrix}.$ 
	Now $s(L^*) = 3$ means that no equation induced by $L^*$ has length less than 3, which implies in particular that 
	\begin{align}\label{eq:coeff-conditions} a_1^*,a_2^*,b_1^*,b_2^* \neq 0 \text{ and }  a_1^*b_2^*-a_2^*b_1^* \neq 0.\end{align}
	We define 
    \begin{align*} 
		\alpha_1 &=a_2^*(b_1^*)^{-1}(a_1^*b_2^*-a_2^*b_1^*), & \alpha_2 &=(a_1^*)^{-1}b_2^*(a_2^*b_1^*-a_1^*b_2^*),  \\
		\alpha_3 &= - a_2^*b_2^*(a_1^*b_1^*)^{-1}, &\text{ and \quad  } \alpha_4 &= 1, 
	\end{align*}
	and notice that the conditions~\eqref{eq:coeff-conditions} imply immediately that $\alpha_1,\ldots,\alpha_4\neq 0.$ Let $\alpha=(\alpha_1,\ldots,\alpha_4).$ 
		Now consider the quadratic form  $Q^*$ on $\F_q^n\times\F_q^n$ defined by 
	$$Q^*(x,y) = \alpha_1|x|^2 + \alpha_2 |y|^2 + \alpha_3 |a_1^*x + a_2^* y|^2 + |b_1^*x + b_2^*y|^2,$$
	where we write $|v|^2$  for $ v \cdot v$. The choice of $\alpha_1,\ldots,\alpha_4$ above implies that $Q^* \equiv 0$ on $\F_q^n\times \F_q^n$ (this can be seen by considering the coefficients of $|x|^2$, $ x\cdot  y$ and of $|y|^2,$ respectively).

	Now, let $L$ be an arbitrary $(2\times 4)$-system over $\F_q$ %with $s(L)=3$, 
	and let 
	\begin{equation}\label{eq:sol}
		\sol(L; \Fqn)=\big\{(x, y, a_1 x + a_2y, b_1x + b_2y): x, y \in \Fqn\big\}
	\end{equation}
	be a parameterisation of the solution set of $L(\xv)=0$ (again, after possible reordering of the variables), where $a_1,a_2,b_1,b_2 \in \F_q.$  

	Let $\mathcal{Q}_L$ be the set of quadratic forms $Q: (\Fqn)^2 \to \mathbb{C}$ of the form
	$$Q(x, y)= \beta_1|x|^2 + \beta_2 |y|^2 + \beta_3 |a_1x + a_2 y|^2 + \beta_4 |b_1x + b_2y|^2,$$
	where $\beta_j \in \{\pm \alpha_1, \pm \alpha_2, \pm \alpha_3, \pm \alpha_4 \}$, with repetitions allowed. Write $G_n$ for $\gownn{f}$ and define 
	$h(x,y) = 2^{-12}\fdag(x)\fdag(y)\fdag(a_1 x + a_2 y)\fdag(b_1 x + b_2 y).$ 
	Then by \eqref{def:gowers} and \eqref{eq:sol} we have 
	\begin{align}\label{eq:gown-start}
		\Lambda_L(G_n) 
		&= \E_{(x,y) \in (\mathbb{F}_q^{n})^2}G_n(x)G_n(y)G_n( a_1 x + a_2 y )G_n( b_1 x + b_2 y )\nonumber\\
		&= \sum_{Q \in \cQ_L} \E_{x,y}h(x,y)\omega^{\Tr(Q(x,y))},
	\end{align}
    where in the second equality we use linearity of the trace function.
    If $\Tr\circ Q\equiv 0$ on $\F_q^n\times \F_q^n$ (which can be shown to be the case only if $Q\equiv 0$) then the corresponding term in the sum is 
    \begin{align}\label{eq:f-ext1}
		\E_{x,y} h(x,y) &= 2^{-12}\Lambda_L(\fdag) 
		=  2^{-12} \Lambda_L(f),
	\end{align}
    where the identity $\Lambda_L(\fdag) 
		=   \Lambda_L(f)$ can be justified as follows. Since  $\xv\in\solA{L}{\F_q^n}$ if and only if  every row of the solution array $(x_1, x_2 , x_3 ,x_{4})$ is a solution to $L$ in $\F_q$, we have that
	\begin{align*}
%	    \Lambda_L(\fdag) 
		%&= 
		\frac{1}{ |\solA{L}{\F_q^n}|} \sum_{\xv\in\solA{L}{\F_q^n}} \prod_{i=1}^{4} \fdag(x_i) %\nonumber\\
		= \frac{1}{ |\solA{L}{\F_q}|^n} \sum_{\xv\in\solA{L}{\F_q^d}} \prod_{i=1}^{4} f(x_i) \cdot |\solA{L}{\F_q}|^{n-d}.
%		=   \Lambda_L(f).
	\end{align*}
	Thus, if we let $\cQ_L^\times$ denote the set of quadratic forms in $\cQ_L$ 
	such that $\Tr\circ Q\not\equiv 0$  and $K_L := |\cQ_L\sm\cQ_L^\times|,$   then~\eqref{eq:gown-start} and~\eqref{eq:f-ext1} imply that 
	\begin{align}\label{eq:aux593}\Lambda_L(G_n) = 2^{-12}K_L\Lambda_L(f) + \sum_{Q\in \cQ_L^\times} \E_{x,y} h(x,y)\omega^{\Tr(Q(x,y))}.\end{align}
	Now let $Q \in\cQ_L^\times.$ 
	Since $\fdag(x)$ only depends on the first $d$ coordinates of $x$ we may write 
	$$h(x,y) = 2^{-12}\sum_{u \in\F_q^d}\sum_{z \in\F_q^d} 
	\ind_{W_{u }}(x)\ind_{W_{z}}(y) c_{u,z},$$ 
	where $W_{u}$ is the affine subspace 
	$(u,0^{n-d}) + W,$ $W:= 0^d \times \mathbb{F}_q^{n-d}$
	and 
	$ c_{u,z} = f(u)f(z) f(a_1u+a_2z) f(b_1u+b_2z).$
	Thus, 
	\begin{align*}\label{eq:f-ext3}
		\left|\E_{x,y} h(x,y)\omega^{\Tr(Q(x,y))} \right| &\le 2^{-12}\sum_{u \in\F_q^d}\sum_{z \in\F_q^d} |c_{u,z}|  \left|\E_{x,y} \ind_{W_{u}}(x)\ind_{W_{z}}(y) \omega^{\Tr(Q(x,y))} \right| 
		\le q^{2d-n/2},
	\end{align*}
	by \thref{lem:Fourier-calculation} since $\Tr\circ Q\not\equiv 0$ (and thus $Q\not\equiv 0$), and since each $|c_{u,z}| \le 1.$ 
	The assertion of the lemma for $L$ now follows  from~\eqref{eq:aux593} as $K_L$ is clearly a non-negative integer and $|\cQ_L|=16.$ 
	
	Finally, we note that $K_{L^*}\ge 1$ as $\alpha_1,\ldots,\alpha_4$ are chosen such that the form $Q^*$ above vanishes everywhere and thus $\Tr\circ Q^*\equiv 0$. 
\end{proof}

\section{Concluding remarks}\label{sec:rems}
In this paper, we resolve a question of Saad and Wolf by showing that any irredundant system $L$ over $\F_q$ inducing a 4-AP is uncommon. In fact, \thref{t:4APs} is much more general as it applies to any irredundant  $(2 \times 4)$-system, not just a 4-AP. An important step in the proof is to show that $(2\times 4)$-systems with $s(L)=3$ are uncommon, (see ~\thref{l:psi-neg}). We do not know whether there is an analogue of this lemma for larger $k$, but conjecture the following.
\begin{conj}\thlabel{conj:2Xk}
   For even $k \ge 6$ and large odd $q$, any $(2 \times k)$-system $L$ with $s(L) = k-1$ is uncommon. 
\end{conj}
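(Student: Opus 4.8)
The plan is to follow the strategy used for \thref{l:psi-neg}: exhibit a single balanced function that witnesses uncommonness for every $(2\times k)$-system with $s(L)=k-1$ at once. First note that this suffices. Since $k$ is even, $s(L)=k-1$ is odd, so $c(L)=k$ and $L$ is itself critical with $\Cc(L)=\{[k]\}$ and $L_{[k]}=L$. By Corollary~\ref{cor:simple}, for any balanced $f:\F_q^n\to\fint$ we have $\com_L(f)=2^{1-k}+2\Lambda_L(f)$, so (via \thref{def:fctdef}, or equivalently \thref{t:main-uncommon}) it is enough to produce, for each such $L$, a balanced $f$ with $\Lambda_L(f)<0$; no Fourier-analytic input from Section~\ref{sec:ProofOfMain1} is needed for the conjecture as stated. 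The stronger assertion that every irredundant system \emph{inducing} such an $L$ is uncommon would additionally require a $(2\times k)$-analogue of \thref{l:gowers}, which we do not address here.

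The next step is the relevant structure theory. A $(2\times k)$-system has a representing matrix $M$ whose $k$ columns lie in $\F_q^2$, and $s(L)=k-1$ holds precisely when these columns are pairwise non-proportional, i.e.\ correspond to $k$ distinct points of $\mathbb{P}^1(\F_q)$ (forcing $k\le q+1$, hence the hypothesis that $q$ is large). Dually, $\sol(L;\F_q)$ is a $(k-2)$-dimensional code whose orthogonal complement has minimum weight $k-1$, and a short puncturing argument shows that every nonzero solution over $\F_q$ has at most $k-3$ zero coordinates, and that the number $N_r$ of solutions over $\F_q$ with exactly $r$ zero coordinates is an explicit polynomial in $q$ of degree $k-2-r$ which is the \emph{same} for every $L$ with $s(L)=k-1$. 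This uniformity is exactly what will let one function work for all such $L$ simultaneously.

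The witness is the natural generalisation of \eqref{eq:psi}: fix $\alpha=1/4$, let $d$ be a suitable power of $q$ (for $k=4$ the choice $d=(q-3)^2$ balanced all error terms; in general I expect $d=(q-c_k)^{k-2}$ for an appropriate constant $c_k$), set $\beta=-\alpha(q-1)/d$, and define $\psi:\F_q^d\to\fint$ to equal $0$ on inputs with two or more zero coordinates, $\alpha$ on those with none, and $\beta$ on those with exactly one; then $\psi$ is balanced. One computes $\Lambda_L(\psi)$ exactly as in the proof of \thref{l:psi-neg}: a solution $\xv\in\sol(L;\F_q^d)$ is a $(d\times k)$ array whose rows are solutions over $\F_q$, the contribution vanishes unless every column contains at most one zero, and one organises the sum by the zero-pattern of the array. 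Since a row cannot have between $k-2$ and $k-1$ zeros, each row carries $0,1,\dots,k-3$ or $k$ zeros, subject to the constraint that no two zeros share a column; summing over such patterns with weights $N_r$ yields a closed form for $\Lambda_L(\psi)$ of the shape $\bigl(N_0/q^{k-2}\bigr)^d\,\alpha^k$ times an alternating sum of binomial coefficients, falling factorials $(d)_\ell/d^\ell$, and powers of ratios such as $N_1/N_0$ — the direct analogue of \eqref{eq:aux003}.

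The main obstacle is the final step: showing this alternating sum is negative for all large $q$. For $k=4$ the sum exhibits massive cancellation (the first several orders in $1/q$ all vanish) and the surviving term is a negative constant over $q^4$; proving the analogue for general even $k$ requires pinning down the order at which the cancellation stops and the sign of the surviving constant, which is not evident a priori and may depend on $k$. A secondary technical point is choosing the exponent in $d$ so that the "all-zero row'' correction and the falling-factorial corrections land strictly below that surviving term. Once $\Lambda_L(\psi)<-\nu$ is established for $q\ge q_0(k)$, the remaining small $q$ are handled as in \thref{l:psi-neg}, by pulling $\psi$ back along a linear isomorphism $\F_q^{Nd}\to\F_{q^N}^d$ with $q^N\ge q_0(k)$; then Corollary~\ref{cor:simple} gives $\com_L(\psi)<2^{1-k}$, so $L$ is uncommon.
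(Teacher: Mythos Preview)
This statement is a \emph{conjecture} in the paper, not a theorem: the authors explicitly say they do not know whether an analogue of \thref{l:psi-neg} exists for larger $k$, and no proof is offered. So there is no ``paper's own proof'' to compare against; your proposal should be read as an attack on an open problem.

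Your overall strategy is exactly the one the authors implicitly have in mind, and the structural groundwork is correct: for even $k$ with $s(L)=k-1$ one has $c(L)=k$, so $L$ is its own unique critical system and Corollary~\ref{cor:simple} reduces everything to finding a balanced $f$ with $\Lambda_L(f)<0$; the columns of any representing matrix are pairwise non-proportional points of $\mathbb{P}^1(\F_q)$; the weight distribution $N_r$ is uniform across all such $L$ by a clean inclusion--exclusion. These are the right observations.

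There are, however, two genuine gaps. First, you understate the combinatorics for $k\ge 6$. In the $k=4$ case each nonzero row carries at most one zero, so the only zero patterns are ``$\ell$ singleton rows'' or ``one all-zero row'', yielding the tidy sum \eqref{eq:aux003}. For $k\ge 6$ a nonzero row can carry up to $k-3\ge 3$ zeros, so one must sum over all ways to distribute the $\ell$ column-zeros among rows with parts of size $\le k-3$ (plus the all-zero-row piece). These multi-zero-row contributions are of the \emph{same order in $q$} as the singleton-row contributions (since $N'_r/N'_0\asymp q^{-r}$), so they cannot be absorbed as error terms; the resulting expression is not a ``direct analogue'' of \eqref{eq:aux003} but a sum over compositions with weights $N'_r$. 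Second --- and this you acknowledge --- even after writing down the correct closed form, establishing the sign of the surviving term after cancellation is the heart of the matter and is genuinely open; without it the argument is a strategy, not a proof. (The final paragraph on small $q$ via field extension is sound but unnecessary: the conjecture already assumes $q$ large.)
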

For odd $k,$ the situation seems to be more delicate as we have found examples of $(2 \times 5)$-systems with $s(L) = 4$ that are common, and examples with the same parameters that are uncommon. We give a more thorough discussion in~\cite{klm21-2}.

We remark that even if a function $\psi=\psi(k,q)$ as in \thref{l:psi-neg} is found, certifying \thref{conj:2Xk}, we currently also lack an appropriate  generalisation of \thref{l:gowers} to $k>4$ to prove an analogue of \thref{t:4APs} for this setting. 

\medskip 

\paragraph{{\bf Further applications of \thref{t:main-uncommon}}} 

An exciting consequence of \thref{t:main-uncommon} is that it opens up new avenues for studying commonness of linear \emph{systems} by utilising tools from discrete Fourier analysis. These methods were crucial in~\cite{fpz19} to find the characterisation of common 1-equation systems via the identity~\eqref{eq:single-eq}. Yet, for two or more equations, we are not aware of such a direct approach. 

Our application of such techniques (as in ~\eqref{def:gowers}) was inspired by the seminal work of Gowers~\cite{gowers17}; we remark that similar ideas also appear in ~\cite{fpz19,gowers20,sw17}. Although we do use discrete Fourier analysis to show that the uncommonness of the $(2\times 4)$-subsystem can be `transferred' to show uncommonness of the ambient system $L,$ (see~\thref{l:fhat-zero,l:gowers}), these techniques are complemented by other, more direct methods. One particular example of this is the function in \thref{l:psi-neg} which shows that certain $(2\times 4)$-systems are uncommon. This function is explicitly constructed (without the use of discrete Fourier techniques). It would be interesting to see how far the utilisation of Fourier methods can go towards characterising systems of equations, though we believe that to harness the full power of these techniques, other ideas are needed.

We demonstrate the power of using \thref{t:main-uncommon} together with the Fourier approach for 1-equation systems by proving   \thref{t:main-evens} which asserts that if $s(L)$ is even and all the critical equations are uncommon, then $L$ is uncommon. It would be interesting to know whether this condition can be relaxed to requiring just the majority of the critical equations to be uncommon. 
\begin{question}
    Let $L$ be a system such that more than half of the critical systems $L_B,$ for $B\in \Cc(L),$ are uncommon. Is it true that $L$ must be uncommon as well? 
\end{question}

When $s(L)$ is even, a construction using random Fourier coefficients similar to what is used in~\cite{fpz19} and in Theorem~\ref{t:main-evens} may certify uncommonness. 

With the aim of finding further applications of \thref{t:main-uncommon}, we ask the following. 
\begin{question}
    \label{conj:halfdensity}
 For a $k$-variable system $L$, suppose that there is a set $A\se \Fqn$ such that the density of monochromatic solutions in $(A,\compl{A})$ is less than $\alpha^k + (1-\alpha)^k,$ where $\alpha= |A|/q^n$. Must there exist a set $A'\se\F_q^{n'}$ (for some $n'\ge 1$) of  density roughly 1/2 such that the density of monochromatic  solutions in $(A',\compl{A'})$ is less  than $2^{1-k}$?
\end{question}
An affirmative answer would show that it suffices to restrict our attention to sets of density $1/2$ (which correspond to balanced functions). This would yield a partial converse of Theorem~\ref{t:main-uncommon}, possibly allowing us to find more common systems.
\medskip 

\paragraph{{\bf Sidorenko systems.}}
A system is {\em Sidorenko} if for any $n\ge 1$ and any $A\se \Fqn$ the number of solutions to $L=0$ in $A$ is asymptotically (as $n\to\infty$)  at least the expected number of solutions in a random set of density $|A|/q^n$. It is easy to see that a system is common if it is Sidorenko. Linear homogeneous equations that are Sidorenko are fully characterised~\cite{fpz19,sw17}. In a companion paper~\cite{klm21-2}, we analyse sufficient and necessary conditions for a system of two or more equations to be Sidorenko. One of our main results states that any system $L$ with $s(L)$ odd is not Sidorenko. 
In the other direction, we also find a large family of systems that \emph{are} Sidorenko. These system are formed by combining various Sidorenko equations in a block-like fashion. For a precise statement, we refer the interested reader to~\cite{klm21-2}, where we also gather some related open questions.

\medskip

\section*{Acknowledgements} Partially supported by the Australian research council (DP180103684) and by the European Union’s Horizon 2020 research and innovation programme [MSCA GA No 101038085].
Part of this research was carried out during the Graph Theory Downunder workshop at the mathematical research institute Matrix. We would like to thank Matrix for its support and hospitality. 
We would also like to thank the anonymous referee for their careful reading and very helpful comments.

   \bibliographystyle{abbrv} % or plain, acm
  %  \bibliography{commonsid.bib}

\end{document}